\documentclass[12pt, oneside,reqno]{amsart}
\usepackage{amsmath, amsfonts, amssymb}
\usepackage{eucal}
\usepackage{mathrsfs}
\usepackage{latexsym}
\usepackage{cite}
\usepackage{bbm}
\usepackage{todonotes}
\usepackage{mathtools}
\usepackage{graphicx}
\usepackage{relsize}
\usepackage{exscale}
\newcommand{\proofbox}{\mbox{ $\Box$}\\}
\newcommand{\R}{\mathbb{R}}

\newcommand{\N}{\mathbb{N}}

\newcommand{\HH}{\mathcal{H}}

\newtheorem{lemma}{LEMMA}[section]
\newtheorem{theo}[lemma]{THEOREM}

\newtheorem{coro}[lemma]{COROLLARY}

\newtheorem{prop}[lemma]{PROPOSITION}

\newtheorem{problem}[lemma]{PROBLEM}

\title[The Weighted Dual Minkowski Problem Under Group Symmetry]
{The Weighted Dual Minkowski Problem Under Group Symmetry}

\author[K. B\"or\"oczky]{K\'aroly J. B\"or\"oczky}
\address{Alfr\'ed R\'enyi Institute of Mathematics,
 Hungarian Academy of Sciences,
 Realtanoda u. 13-15, H-1053, and ELTE, Mathematical Institute,
 Budapest, Hungary}
\email{boroczky.karoly.j@renyi.hu}

\author[S. Mui]{Stephanie Mui}
\address{Department of Mathematics, Georgia Institute of Technology,  686 Cherry St NW, Atlanta, GA 30332, USA}
\email{smui3@gatech.edu}

\author[G. Zhang]{Gaoyong Zhang}
\address{Department of Mathematics,
Courant Institute of Mathematical Sciences,
New York University,
251 Mercer Street,
New York, NY 10012, USA}
\email{gaoyong.zhang@nyu.edu}

\subjclass{52A38, 35J20}

\begin{document}

\maketitle

\begin{abstract}
  The paper studies the weighted dual Minkowski problem for the weighted $q$th dual curvature measure
  of convex bodies in $\mathbb R^n$ first posed by Huang, Lutwak, Yang, and Zhang.
  Most of the results are in the $G$-invariant setting,
  i.e. when the convex bodies are invariant under a closed subgroup $G$ of $O(n)$.
  They generalize previous results on the existence of solutions to the dual Minkowski problem
  for origin-symmetric convex bodies.
 It is proved:
 (i) a full characterization of the  $G$-invariant weighted dual curvature measure for $q\in(0,1]$,
 (ii) a necessary and sufficient condition for the existence of solutions to the $G$-invariant weighted
 dual Minkowski problem for $q\in(1,n)$,
 (iii) a sufficient condition for the $G$-invariant weighted dual curvature measure for $q>1$, and
 (iv) an extension of Henk-Pollehn's dual curvature measure concentration property.
\end{abstract}

\section{Introduction}

The classical Minkowski problem is a central question in the Brunn-Minkowski theory of convex bodies
and is concerned with the characterization of the surface area measure. Namely, it asks what kind of
Borel measures on the unit sphere can be the surface area measure of a convex body, which can be defined
in a direct way as follows. Let $\partial'K$ denote the subset of the boundary  of a convex body
$K\subset\R^n$ where there is a unique outer unit normal vector.
It is well-known that $\partial K\backslash \partial'K$ is the countable union of compact sets of finite
$\HH^{n-2}$-measure (see Schneider \cite[{Theorem~2.2.5}]{Sch14}), and hence $\partial'K$ is Borel
and $\HH^{n-1}(\partial K\setminus\partial'K)=0$. Then $\nu_K:\partial'K\to S^{n-1}$ is a function
that is known as the spherical Gauss map, and
 $\nu_K$ is continuous on $\partial'K$.
 The surface area measure of $K$, denoted by $S(K,\cdot)$, is a Borel measure on $S^{n-1}$ such that
 for any Borel set $\eta\subset S^{n-1}$, we have
$S(K,\eta)=\HH^{n-1}({\nu}_K^{-1}(\eta))$. An important property of the surface area measure is that
it satisfies Alexandrov's variational formula. Namely,  if $\varphi:S^{n-1}\to\R$ is continuous
and $K_t$ is the corresponding Wulff-shape
$$
K_t=\{x\in\R^n:\,\langle x,u\rangle\leq h_K(u)+t\varphi(u)\,\;\forall u\in S^{n-1}\}
$$
when $|t|$ is small, then
\begin{equation}
\label{Alexandrov}
\lim_{t\to 0}\frac{V(K_t)-V(K)}{t}=\int_{S^{n-1}}\varphi(u)\,d S(K, u).
\end{equation}

The classical Minkowski problem asks for necessary and sufficient conditions for a Borel measure
on $S^{n-1}$ to be the surface area measure of a convex body.
Similar questions have been posed, and at least partially solved, for other measures associated with
convex bodies in the Brunn-Minkowski theory. Some important examples include the integral curvature
measure $J(K,\cdot)$ of Alexandrov (see (\ref{Jdef}) below), or the $L_p$ surface area measure
$dS_p(K,\cdot)=h_K^{1-p}dS(K,\cdot)$ for $p\in \R$ introduced by Lutwak \cite{Lut93a}, where
$S_1(K,\cdot)=S(K,\cdot)$ ($p=1$) is the classical surface area measure, and $S_0(K,\cdot)$ ($p=0$) is the
 cone volume measure. Here, some care is needed if $p>1$, when we only consider the case $o\in\partial K$
 if the resulting $L_p$ surface area measure $S_p(K,\cdot)$ is finite.
For a detailed overview of these measures and their associated Minkowski problems, see Schneider \cite{Sch14},
and Huang-Lutwak-Yang-Zhang \cite{HLYZ16}.

Lutwak built the dual Brunn-Minkowski theory in the 1970s as a  ``dual" counterpart of the classical theory.
Although there is no formal duality between the classical and dual theories, one can say roughly that
in the dual theory, the radial function plays a similar role to the support function in the classical theory.
The dual Brunn-Minkowski theory concerns the class $\mathcal{S}^n_{o}$ of measurable star shaped sets of $\R^n$,
where $Q\subset\R^n$ is called star shaped if
\begin{itemize}
\item $\exists R>r>0$ such that $rB^n\subset Q\subset RB^n$;
\item $\lambda x\in Q$ for any $x\in Q$ and $\lambda\in[0,1]$.
\end{itemize}
 Clearly,  $\mathcal{K}^n_{o}\subset \mathcal{S}^n_{o}$. For a star shaped set $Q\in\mathcal{S}_o^n$,
 we define the radial function and the norm corresponding to $Q$ as
\begin{align*}
\varrho_Q(u)&=\max\{t\geq 0:\,tu\in Q\} \mbox{ \ \ for }u\in S^{n-1}\\
\|x\|_Q&=\min\{t\geq 0:\,x\in tQ\}\mbox{ \ \ for }x\in\R^n,
\end{align*}
and hence $\varrho_Q(u)=1/\|u\|_Q$ for $u\in S^{n-1}$.
It follows that radial functions of star shaped sets are the bounded positive measurable functions on $S^{n-1}$.

Dual intrinsic volumes $\widetilde{V}_q(K)$ for convex bodies $K\in\mathcal{K}^n_{(o)}$ for $q\in\R$ were
defined by Lutwak \cite{Lut75}, and Lutwak-Yang-Zhang \cite{LYZ18} extended the definition to the weighted
case with respect to
a $Q\in\mathcal{S}_o^n$ as
\begin{align}
\label{dualintrvol}
\widetilde{V}_q(K)&=\frac1n\int_{S^{n-1}}\varrho^q_K(u)\,d\mathcal{H}^{n-1}(u)=
\frac{q}n\int_K\|x\|^{q-n}\,dx\\
\label{dualintrvolQ}
\widetilde{V}_q(K,Q)&=\frac1n\int_{S^{n-1}}\varrho^q_K(u)\varrho^{n-q}_Q(u)\,d\mathcal{H}^{n-1}(u)
=\frac{q}n\int_K\|x\|_Q^{q-n}\,dx,
\end{align}
where the second equality follows from using polar coordinates.
The $q$th dual intrinsic volume is $q$-homogeneous. Namely, if $\lambda>0$, then
$$
\widetilde{V}_q(\lambda\,K,Q)=\lambda^q\widetilde{V}_q(K,Q).
$$
According to Lutwak-Yang-Zhang \cite{LYZ18}, in the weighted case \eqref{dualintrvol}, one has that
for any $\Phi\in {\rm SL}(n)$,
$$
\widetilde{V}_q(\Phi\,K,\Phi\,Q)=\widetilde{V}_q(K,Q).
$$

We note that
the so-called cone volume measure defined as $V(K,\cdot)=\frac1n\,S_0(K,\cdot)=\frac1n\,h_KS(K,\cdot)$
and Alexandrov's integral curvature measure $J(K,\cdot)$ of a convex body $K\in\mathcal{K}_{o}^n$
can both be represented as dual curvature measures in the following way
\begin{eqnarray}
\label{conevol}
\mbox{$V(K,\cdot)=\frac1n$}\,S_0(K,\cdot)&=& \widetilde{C}_n(K,B^n;\cdot)\\
\label{Jdef}
J(K^*,\cdot)&=& \widetilde{C}_0(K,B^n,\cdot).
\end{eqnarray}
And hence, the dual curvature measures interpolate between well-known measures, including
the integral curvature measure and the cone-volume measure.

The papers of Huang-Lutwak-Yang-Zhang \cite{HLYZ16} and Lutwak-Yang-Zhang \cite{LYZ18} posed
the following fundamental question:

\begin{problem}[Dual Minkowski problem]
\label{LpdualMinkowskiProblem}
Let $q\in\R$ and $Q\in\mathcal{S}^n_{o}$.
Find the necessary and sufficient conditions such that for a given finite Borel measure $\mu$ on $S^{n-1}$,
there exists a convex body $K\in\mathcal{K}_{o}^n$ that is a solution to the dual curvature measure equation,
\[
\widetilde{C}_{q}(K,Q,\cdot)=\mu.
\]
In the continuous case,  the measure equation becomes the Monge-Amp\`{e}re equation on $S^{n-1}$,
%(see  \eqref{Monge-AmpereQ0} in Section~\ref{secregularity})
\begin{equation}
\label{Monge-AmpereQ}
\det(\nabla^2 h(u)+h(u)\,{\rm Id})=\mbox{$\frac1n$}\,\|\nabla h(u)+h(u)\,u\|_Q^{n-q}  f(u).
\end{equation}
\end{problem}

When $Q=B^n$, the Monge-Amp\`ere equation becomes
\begin{equation}
\label{MongeDualMink}
\det(\nabla^2 h(u)+h(u)\,{\rm Id})=\mbox{$\frac1n$}\,\|\nabla h(u)+h(u)\,u\|^{n-q} f(u),
\end{equation}
and the following results are known:
\begin{itemize}
\item If $q<0$, then any Borel measure on $S^{n-1}$
not concentrated on a closed hemisphere is a $q$th dual Minkowski curvature measure according to
Zhao \cite{Zha17} and Li-Sheng-Wang \cite{LSW20}.
\item If $q=0$, then Aleksandrov \cite{Ale42,Ale96} characterized $\widetilde{C}_{K,0}$, the so called
integral curvature measure (see also Oliker \cite{Oli07}, Bertrand \cite{Ber16},
and B\"or\"oczky-Lutwak-Yang-Zhang-Zhao \cite{BLYZ20}).
\item If $q>0$ and $n=2$, then  \eqref{MongeDualMink} has a solution for any measurable $f$,
provided $\frac1{c}<f<c$ for a $c>1$, according to Chen-Li \cite{ChL18}.
\item If $0<q<n$, then a finite even Borel measure $\mu$ on $S^{n-1}$ is a $q$th dual Minkowski
curvature measure if and only if
\begin{equation}
\label{subspace-concentration}
\mu(L\cap S^{n-1})<\mbox{$\frac{{\rm dim}\,L}q$}\cdot \mu(S^{n-1})
\end{equation}
for any proper linear subspace $L\subset \R^n$, according to B\"or\"oczky-Lutwak-Yang-Zhang-Zhao
\cite{BLYZ19},
where one must add that $\mu$ is not concentrated onto a great subsphere if $q<1$.
These results have been partially extended to centered convex bodies by
Eller-Henk \cite{ElH23}.
\item If $q=n$, then any measure satisfying \eqref{subspace-concentration} is a cone volume measure
($\widetilde{C}_{K,n}$)
according to Chen-Li-Zhu \cite{CLZ19}. There are obstructions for a finite Borel measure to be
a cone volume measure, according to B\"or\"oczky-Heged\H{u}s \cite{BoH15}, and a variant of
\eqref{subspace-concentration} characterizes even cone volume measures according to
B\"or\"oczky-Lutwak-Yang-Zhang \cite{BLYZ13}.
\item If $q\geq n+1$ and $K\in\mathcal{K}^n_{(o)}$ is origin symmetric, then Henk-Pollehn
\cite{HeP18} prove the necessary condition
$$
\widetilde{C}_{K,q}(L\cap S^{n-1})<\mbox{$\frac{q-n+{\rm dim}\,L}q$}\cdot \widetilde{C}_{K,q}(S^{n-1})
$$
for any proper linear subspace $L\subset \R^n$.
\end{itemize}
\noindent{\bf Remark.}
No uniqueness of the solution of the dual Minkowski problem \eqref{MongeDualMink} holds in general.
For example, in the case of $q>2n$ and $f\equiv 1$, even if one assumes that the solution $h$ is even
(see Chen-Chen-Li \cite{CCL21}).

Uniqueness of the solution of the $q$th $L_p$ dual Minkowski problem \eqref{Monge-AmpereQ} was
investigated by Chen-Chen-Li \cite{CCL21} and Li-Liu-Lu \cite{LLL22} when $Q=B^n$. The case of
$n=2$ and $f$ being a constant function in \eqref{Monge-AmpereQ} has been completely resolved by
Li-Wan \cite{LiW}. Uniqueness in the the isotropic case, when the $f$ in \eqref{Monge-AmpereQ}
is a constant, was verified for certain ranges of values of $p$ and $q$ by Ivaki-Milman \cite{IvM23}
in the even case and
by Hu-Ivaki \cite{HuI24} in the general case.

Orlicz versions of these Monge-Amp\`ere equations have been
considered by  Li-Sheng-Ye-Yi \cite{LSYY22}, Feng-Hu-Liu \cite{FHL22}
and Hu-Liu-Ma \cite{HLM23}
in the case of the Aleksandrov problem, by Xing-Ye \cite{XiZ20} in the case
of the dual Minkowski problem, and
Gardner-Hug-Weil-Xing-Ye \cite{GHWXY19,GHXY20},
Xing-Ye-Zhu \cite{XYZ22}, and
Liu-Lu \cite{LiL20} in the case
of the general $L_p$ dual Minkowski problem.

For the $G$-invariant case, the weighted $L_p$ dual Minkowski problem for negative $p$ was studied by
B\"or\"czky-Kov\'acs-Mui-Zhang \cite{BKMZ26}.

Another important related variant of the dual Minkowski problem is the so-called
``Chord Minkowski Problem"
({\it cf.} Lutwak-Xi-Yang-Zhang \cite{LXYZ}) and its $L_p$ version by  Xi-Yang-Zhang-Zhao
\cite{XYZZ23} for $p>0$ and
by Li \cite{YLia,YLib} for $p<0$. See also  Guo-Xi-Zhao \cite{GXZ} and
Xi-Yang-Zhang-Zhao \cite{XYZZ23}.
In addition, the
 ``Affine dual Minkowski problem" is proposed by Cai-Leng-Wu-Xi \cite{CLWX}.

Most results mentioned above when $q>0$ are about origin-symmetric convex bodies. The main aim
of this paper is to replace the origin-symmetry with the weaker $G$-symmetry in the study of
the weighted dual Minkowski problem. 
Under $G$-symmetry, 
Guan, Guan \cite{GuG02} considered a variant of the Christoffel-Minkowski problem,  B\"or\"oczky and Kalantzopoulos \cite{BoK22} solved the logarithmic Minkowski problem
and B\"or\"oczky and De \cite{BoD21} further considered the stability. 
B\"or\"oczky, Kov\'acs, Mui, and Zhang \cite{BKMZ26} studied the weighted $L_p$ dual Minkowski problem 
for functions under $G$-symmetry.
Shan \cite{Sha26+} studied the dual Minkowski problem for measures under $G$-symmetry. 
This paper investigates the weighted dual Minkowski problem for measures under $G$-symmetry.  
Our first result is a full characterization for the $G$-invariant weighted dual curvature measure if $0<q\leq 1$.

\begin{theo}
\label{q01-G-weighted}
Let $n\geq 2$, $q\in(0,1]$, $G\subset O(n)$ a closed subgroup without a non-zero fixed point,
and $Q\in\mathcal{S}^n_{o}$ be invariant under $G$.
A non-trivial, $G$-invariant, finite, Borel
measure $\mu$  on $S^{n-1}$ is the weighted dual curvature measure $\widetilde{C}_{q}(K,Q;\cdot)$
of a $G$-invariant
convex body $K$ in $\R^n$ if and only if $\mu$ is not concentrated on any great subsphere.
\end{theo}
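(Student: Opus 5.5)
Necessity is easy; for sufficiency I would use the variational method of Huang--Lutwak--Yang--Zhang restricted to $G$-invariant functions, with the role of origin symmetry (which keeps the origin well inside $K$) played by the absence of a non-zero fixed point of $G$.

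\emph{Necessity.} Suppose $\mu=\widetilde{C}_{q}(K,Q;\cdot)$ for a $G$-invariant convex body $K$. The barycenter (or Steiner point) of $K$ is fixed by $G$, so it is the origin. Hence $o\in{\rm int}\,K$ if $K$ has interior. If $K$ were lower dimensional, then ${\rm aff}\,K$ would be a $G$-invariant affine subspace through $o$, hence a linear subspace. For $q>0$ the defining integral over the radial set is then degenerate, so $K$ must be full dimensional. Now suppose $\widetilde{C}_q(K,Q;\cdot)$ were concentrated on $S^{n-1}\cap v^\perp$. Every radial direction $u$ would then have $\alpha_K(u)\perp v$ for almost all $u$. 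But for $o\in{\rm int}\,K$ the radial Gauss image of the open set of directions $u$ with $\langle u,v\rangle>1-\varepsilon$ meets the open hemisphere $\{\langle w,v\rangle>0\}$ in a set of positive measure. This gives a contradiction, since the density $\varrho_K^q\varrho_Q^{n-q}$ is positive.

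\emph{Sufficiency, setup.} Given $\mu$ $G$-invariant and not concentrated on a great subsphere, consider
\[
\Phi(h)=\frac1q\log\widetilde{V}_q([h],Q)-\frac{1}{\mu(S^{n-1})}\int_{S^{n-1}}\log h\,d\mu ,
\]
or equivalently minimize $\int\log h\,d\mu$ subject to $\widetilde{V}_q([h],Q)=\widetilde V_q(B^n,Q)$. The competitors are positive continuous $G$-invariant functions $h$, and $[h]$ denotes the Wulff shape. The variational formula of Lutwak--Yang--Zhang for $\widetilde V_q(\cdot,Q)$, presumably recalled later in the paper, gives the Euler--Lagrange equation: a minimizer $K$ with $o\in{\rm int}\,K$ satisfies $\widetilde{C}_q(K,Q;\cdot)=\lambda\mu$. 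Restricting to $G$-invariant perturbations is harmless, because both sides are $G$-invariant. One then averages the test function over Haar measure of $G$, so the first variation vanishes against all continuous functions. Homogeneity removes $\lambda$ because $q\neq0$.

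\emph{Sufficiency, key estimate.} Take a minimizing sequence of $G$-invariant bodies $K_i$ with $\widetilde V_q(K_i,Q)$ fixed. Each $K_i$ has $o$ as a $G$-fixed interior point, namely its centroid. I must show that the $K_i$ stay bounded, and that the origin does not go to the boundary in the limit. This is the main obstacle. For boundedness, let $R_i=\max h_{K_i}=h_{K_i}(v_i)$. Since the centroid is at $o$, we have $h_{K_i}(u)\ge c_n R_i\langle u,v_i\rangle_+$. The $G$-invariance lets me average this over $g v_i$, and non-concentration on great subspheres gives a uniform $\delta>0$ with $\int\log\langle u,v\rangle_+\,d\mu\ge-C$ for all $v$. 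This step needs care: for $q\le1$ one uses only that $\mu$ charges no great subsphere, via a compactness argument with $\mu(\{|\langle u,v\rangle|<\varepsilon\})\to0$ uniformly. Hence
\[
\int\log h_{K_i}\,d\mu\ge \mu(S^{n-1})\log R_i-C .
\]
On the other side, $\widetilde V_q$ fixed and $q\le1$ bound the $\log$-contribution. For $q\in(0,1]$ the dual volume of a body of diameter $R$ containing $o$ is at least of order $R^{q}\cdot(\text{width})^{\,\cdot}$; more precisely, $\widetilde V_q(K,Q)\gtrsim R^q\varepsilon^{n-q}$ up to logarithmic factors when $q=1$, as in Böröczky--Lutwak--Yang--Zhang--Zhao. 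Together these force $R_i$ to be bounded, and by Blaschke selection $K_i\to K$.

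If $K$ degenerated into a subspace $L$, it would be $G$-invariant, and the same estimates would make $\int\log h\,d\mu$ tend to $+\infty$ rather than decrease, since $\mu(S^{n-1}\setminus L^\perp)>0$ and $q\le 1<\dim L+\ldots$. So $K$ is a body. Finally $o$ is the centroid of $K$, hence interior, and the Euler--Lagrange step applies. I expect the quantitative lower bound for $\widetilde V_q$ of thin bodies, in the borderline case $q=1$, to be where the most work lies.
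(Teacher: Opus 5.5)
\textbf{Gap in the key estimate.} Your necessity argument and your Euler--Lagrange step match the paper; the latter restricts to $G$-invariant perturbations, averages over Haar measure, and uses the variational formula \eqref{dualAlexandrov}. Your separate treatment of degeneration is unnecessary. Once a minimizing sequence is bounded, $\widetilde V_q(\cdot,Q)=1$ passes to the limit by Lemma~\ref{CpqcontQ}, and $\widetilde V_q$ vanishes on lower-dimensional sets. The genuine gap is in your substitute for Proposition~\ref{q0-G-weighted-sufficient-limit}.
\begin{itemize}
\item The bound $\int\log\langle u,v\rangle_+\,d\mu\ge -C$ is false for every $v$. The integrand is $-\infty$ on the open hemisphere $\{\langle u,v\rangle<0\}$, and this hemisphere has positive $\mu$-measure.
\item Averaging over $G$ only gives $h_{K_i}\ge R_i\,h_{{\rm conv}(Gv_i)}$. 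This lower bound vanishes on ${\rm lin}(Gv_i)^\perp$, which can be a proper $G$-invariant subspace carrying positive $\mu$-mass.
\item ``Not concentrated on a great subsphere'' means $\mu(S^{n-1}\cap v^\perp)<\mu(S^{n-1})$, not $\mu(S^{n-1}\cap v^\perp)=0$. So $\mu(\{|\langle u,v\rangle|<\varepsilon\})\to0$ is not available either.
\end{itemize}
In fact $\int\log h_{K_i}\,d\mu\ge\mu(S^{n-1})\log R_i-C$ fails. Take $G=\{\pm{\rm Id}\}$ and $\mu=\frac14(\delta_{e_1}+\delta_{-e_1})+\frac12\sigma$, with $\sigma$ the normalized spherical Lebesgue measure. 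Take boxes $K_R=\sum_{i<n}[-c_R,c_R]e_i+[-R,R]e_n$ normalized by $\widetilde V_q(K_R,Q)=1$; for $q<1$, $c_R$ stays bounded above and below. Since $h_{K_R}(\pm e_1)=c_R$, one gets $\int\log h_{K_R}\,d\mu\le\frac12\log R+O(1)$.

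\textbf{What is actually needed.} Your volume estimate points the wrong way. A lower bound $\widetilde V_q\gtrsim R^q\varepsilon^{n-q}$, combined with $\widetilde V_q=1$, only tells you the body gets thin. What you need is an upper bound for $\widetilde V_q$ in terms of the smallest semi-axis $a_1$ of the John ellipsoid, which is $G$-invariant and hence $o$-centred. This is Chen's Proposition~\ref{qthIntrinsicBox}.
\begin{itemize}
\item \emph{Case $q<1$.} Chen's bound gives $1=\widetilde V_q(K_i,Q)\lesssim a_1^q$, so $h_{K_i}\ge a_1\ge c>0$ on the whole sphere. Add the uniform cap bound $\mu(\{\langle u,v\rangle\ge\delta\})\ge\tau$ for all $v$. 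This is where $G$-invariance and the absence of fixed points enter, because they make $\mu$ positive on every open hemisphere. Together these give $\int\log h_{K_i}\,d\mu\ge\tau\log(\delta R_i)+(1-\tau)\log c\to\infty$.
\item \emph{Case $q=1$.} There is no inradius bound. Chen's estimate reads $\widetilde V_1\approx a_1(1+\log(a_2/a_1))$, so $a_1\to0$ with $\log(a_2/a_1)\gtrsim 1/a_1$ is compatible with the normalization.
  \begin{itemize}
  \item This forces the thin direction to span a one-dimensional $G$-invariant subspace.
  \item Non-concentration gives that a neighbourhood of this line has $\mu$-measure at most $1-\tau$. This yields $\int\log h_{K_i}\,d\mu\ge(1-\tau)\log a_1+\tau\log a_2-C$.
  \item Then $\log(a_2/a_1)\gtrsim1/a_1$ makes the right-hand side tend to $\infty$.
  \end{itemize}
  This is Case~4(ii) of the paper's proof, and nothing in your proposal replaces it.
\end{itemize}
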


\noindent
{\bf Remark.}

\begin{itemize}
\item Since $K$ is  $G$-invariant, it is centered; namely, its centroid is at the origin.

\item A typical suitable subgroup  $G\subset O(n)$ in Theorem~\ref{q01-G-weighted} is $G=\{{\rm Id},-{\rm Id}\}$.
In this case, $K$ and $\mu$ are $G$-invariant if and only if $K$ is $o$-symmetric and $\mu$ is even.

\item There are many classes of group invariant convex bodies that are not origin-symmetric,
see \cite{BKMZ26} for explicit constructions.
\end{itemize}

\begin{coro}
\label{q01-even-weighted}
Let $n\geq 2$, $q\in(0,1]$, and
$Q\in\mathcal{S}^n_{o}$ be origin-symmetric.
A non-trivial, even, finite, Borel
measure $\mu$  on $S^{n-1}$ is the weighted dual curvature measure $\widetilde{C}_{q}(K,Q;\cdot)$
of an origin-symmetric convex body
$K$ in $\R^n$ if and only if $\mu$ is not concentrated on any great subsphere.
\end{coro}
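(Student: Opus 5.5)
The corollary follows from Theorem~\ref{q01-G-weighted} by taking $G=\{{\rm Id},-{\rm Id}\}\subset O(n)$. The plan is to check that this $G$ meets the hypotheses of the theorem and that each $G$-invariance condition in the theorem becomes the corresponding symmetry condition in the corollary.

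First, $G$ is a closed subgroup of $O(n)$, since it is finite. It has no non-zero fixed point: if $x$ is fixed by $-{\rm Id}$, then $-x=x$, so $x=0$.

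Next, I translate the invariance conditions.
\begin{itemize}
\item A set $Q\in\mathcal{S}^n_o$ is $G$-invariant exactly when $-Q=Q$, that is, when $Q$ is origin-symmetric.
\item A convex body $K$ is $G$-invariant exactly when it is origin-symmetric.
\item A Borel measure $\mu$ on $S^{n-1}$ is $G$-invariant exactly when $\mu(-\eta)=\mu(\eta)$ for every Borel set $\eta\subset S^{n-1}$, that is, when $\mu$ is even.
\end{itemize}

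With these identifications, the hypotheses of the corollary are precisely those of Theorem~\ref{q01-G-weighted} for this $G$. Its conclusion then reads word for word as the corollary: a non-trivial even finite Borel measure $\mu$ equals $\widetilde{C}_q(K,Q;\cdot)$ for some origin-symmetric convex body $K$ if and only if $\mu$ is not concentrated on any great subsphere.

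There is no real obstacle here. The only point needing care is that "convex body $K$" in the theorem should be read with $o$ in the interior, so that $\widetilde{C}_q(K,Q;\cdot)$ is defined. This is automatic for an origin-symmetric convex body with non-empty interior, since such a body contains the origin in its interior.
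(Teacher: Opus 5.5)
Your proposal is correct and follows the paper's route: the corollary is Theorem~\ref{q01-G-weighted} specialized to $G=\{{\rm Id},-{\rm Id}\}$, as the remark after that theorem notes, with $G$-invariance of $Q$, $K$ and $\mu$ becoming origin-symmetry and evenness. Your checks that this $G$ is closed with no non-zero fixed point, and that an origin-symmetric convex body automatically contains $o$ in its interior, supply the routine details the paper leaves implicit.
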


Our second result is if $1<q<n$ and the parameter body $Q\in\mathcal{S}^n_{o}$ is convex, then
the subspace concentration property \eqref{subspace-concentration} characterizes $G$-invariant
weighted dual curvature measures as well.

\begin{theo}
\label{q0n-G-weighted}
Let $n\geq 2$, $q\in(1,n)$, $G\subset O(n)$ a closed subgroup without a non-zero fixed point,
and $Q\in\mathcal{K}^n_{o}$ be $G$-invariant.
Given a non-trivial, $G$-invariant, finite, Borel
measure $\mu$ on $S^{n-1}$, the weighted dual curvature measure equation,
\[
\widetilde{C}_{q}(K,Q;\cdot) = \mu,
\]
has a $G$-invariant convex body solution
$K$ if and only if $\mu$  is not concentrated on any great subsphere, and
\begin{equation}
\label{subspace-concentration-G}
\frac{\mu(L\cap S^{n-1})}{\mu(S^{n-1})} < \frac{{\rm dim}\,L}q
\end{equation}
for any proper $G$-invariant linear subspace $L\subset \R^n$.
\end{theo}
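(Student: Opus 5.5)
The plan follows the variational approach of B\"or\"oczky-Lutwak-Yang-Zhang-Zhao \cite{BLYZ19}, adapted to $G$-invariance and the weight $Q$.

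\textbf{Necessity.} Let $K$ be $G$-invariant with $\widetilde{C}_q(K,Q;\cdot)=\mu$, and let $L$ be a proper $G$-invariant subspace. Since $G$ has no non-zero fixed point, the centroid of $K$ is $o$. Moreover, the orthogonal projection $K|L^\perp$ is $G$-invariant and hence contains $o$ in its relative interior. Because $Q$ is convex with $rB^n\subset Q\subset RB^n$, the weight $\|x\|_Q^{q-n}$ is comparable to $\|x\|^{q-n}$, so the estimate of \cite{BLYZ19} should carry over. Write $\mu(L\cap S^{n-1})$ as $\frac qn\int\|x\|_Q^{q-n}\,dx$ over the cone of points of $K$ whose normals lie in $L$. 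Then slice $K$ along $L^\perp$ and use convexity of $Q$ to compare the integrals; this should yield $\mu(L\cap S^{n-1})\le\frac{\dim L}{q}\mu(S^{n-1})$. Strict inequality would come from an equality analysis, which is only possible if $K$ splits as a direct sum compatible with $L$, and that case is excluded because it would force $q\le\dim L$ contradictions. Non-concentration on great subspheres follows as in Theorem~\ref{q01-G-weighted}.

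\textbf{Sufficiency.} Maximize $\Phi(h)=\int_{S^{n-1}}\log h\,d\mu$ over $G$-invariant $K\in\mathcal{K}^n_{(o)}$ subject to $\widetilde{V}_q(K,Q)=\widetilde{V}_q(B^n,Q)$. I would invoke the variational formula $\frac{d}{dt}\log\widetilde{V}_q(K_t,Q)=\frac{q}{\widetilde V_q}\int\varphi/h_K\,d\widetilde{C}_q(K,Q;\cdot)$, which presumably appears later in the paper or in \cite{LYZ18}. Applied to $G$-invariant perturbations, which suffice after averaging $\varphi$ over Haar measure on $G$, it shows that a maximizer $K_0$ with $o\in{\rm int}\,K_0$ satisfies $\widetilde{C}_q(K_0,Q;\cdot)=c\mu$. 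Rescaling then removes $c$, using $q$-homogeneity. That the maximizer contains $o$ in its interior is automatic, since $G$-invariant bodies have centroid $o$; the remaining issue is boundedness of a maximizing sequence.

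\textbf{Main obstacle.} The hardest step is the a priori bound. Suppose a maximizing sequence $K_i$ is unbounded. Take John-type ellipsoids, or a $G$-equivariant choice of principal axes, with semi-axes $a_{1,i}\le\dots\le a_{n,i}$ and $a_{n,i}\to\infty$. Since the $K_i$ are $G$-invariant, the subspaces spanned by groups of axes with comparable growth rates can be chosen $G$-invariant in the limit. This is a delicate step, and I would first average the inertia matrix over $G$ so that its eigenspaces are $G$-invariant. The estimate of \cite{BLYZ19} then bounds $\Phi(h_{K_i})$ above by a sum of terms $\log a_{j,i}\cdot(\mu(L_j\cap S^{n-1})-\mu(L_{j-1}\cap S^{n-1}))$ together with the constraint $\prod a_j^{\,\cdot}\approx$ const coming from $\widetilde V_q$, where $\widetilde V_q$ is comparable to the unweighted one since $Q$ is bounded. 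Using strict subspace concentration \eqref{subspace-concentration-G} on the $G$-invariant flags $L_j$, this upper bound tends to $-\infty$, a contradiction. Blaschke selection then provides the maximizer.
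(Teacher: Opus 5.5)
Your overall architecture matches the paper's: a $G$-invariant extremal problem with Haar-averaged test functions and a John-ellipsoid coercivity estimate for sufficiency, and the cone representation of $\widetilde C_q(K,Q;L\cap S^{n-1})$ plus slicing along $L^\perp$ for necessity. There are, however, two genuine gaps.

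\textbf{Sufficiency: the extremal problem has the wrong orientation.} You must \emph{minimize} $\int_{S^{n-1}}\log h_K\,d\mu$ subject to $\widetilde V_q(K,Q)={\rm const}$, not maximize it; the paper minimizes $\mathcal{E}(C)=\int\log h_C\,d\mu-\frac1q\log\widetilde V_q(C,Q)$.
\begin{itemize}
\item As soon as $\R^n$ has a proper $G$-invariant subspace, there are unbounded $G$-invariant bodies with $\widetilde V_q(K,Q)$ fixed. Proposition~\ref{q0-G-weighted-sufficient-limit} shows that along any such sequence $\int\log h_{C_m}\,d\mu\to+\infty$. So your supremum is $+\infty$, no maximizer exists, and the upper bound tending to $-\infty$ that you aim for is false.
\item Subspace concentration in fact gives a \emph{lower} bound. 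Partition $S^{n-1}$ into sets $B_\alpha$ on which $h_{C_m}\ge\frac{\delta}{n}a^{(m)}_{k_\alpha}$, and control the partial sums of $\mu(B_\alpha)$ on a $G$-invariant flag adapted to the John ellipsoid. Then sum by parts, using Chen's box estimate to turn $\widetilde V_q(C_m,Q)=1$ into a lower bound on the relevant product of half-axes.
\item Even when a maximizer exists (e.g.\ $G$ irreducible, where all such bodies are uniformly bounded), you cannot take the first variation. In general $\frac{d}{dt}\int\log h_{K_t}\,d\mu\neq\int\varphi/h\,d\mu$, and the Wulff-shape trick only works for a minimizer: $f(t)=\int\log(h_{\widetilde K}+t\varphi)\,d\mu-\frac1q\log\widetilde V_q(K_t,Q)\ge\mathcal{E}(K_t)\ge f(0)$.
\end{itemize}
With the orientation reversed, your plan is the paper's.

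\textbf{Necessity: the key section inequality is missing.} Comparability of $\|x\|_Q^{q-n}$ with $\|x\|^{q-n}$ only yields $\mu(L\cap S^{n-1})\le C\,\frac{\dim L}{q}\,\mu(S^{n-1})$ with some $C>1$ depending on $Q$, never the sharp constant. ``Slice and use convexity of $Q$'' does not say how the slices are compared. What is needed is this: for $u\in L\cap S^{n-1}$, $z=\varrho_{K|L}(u)u$, $M=K\cap(z+L^\perp)$ and $f=\|\cdot\|_Q^{q-n}$,
\[
\int_{K\cap(\lambda z+L^\perp)}f\,d\HH^{n-m}\ \ge\ \int_{M}f\,d\HH^{n-m}\qquad\mbox{for }\lambda\in[0,1],
\]
strictly for small $\lambda$ (Lemma~\ref{section}).

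This is where $G$ enters. Since ${\rm conv}(Gz)$ has centroid $o$, one writes $\lambda z=\sum_i\lambda_ig_iz$ with $g_1={\rm Id}$, $g_i\in G$, $\lambda_i>0$. Hence the inner slice contains $\sum_i\lambda_ig_iM$. Brunn--Minkowski applied to the sets $g_iM\cap\{f>\alpha\}$, the $G$-invariance of $f$, and the layer-cake formula then give the inequality. The superlevel sets $\{f>\alpha\}$ are dilates of $Q$, which is exactly where convexity of $Q$ and $q<n$ are used.

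Integrating against $r^{m-1}dr$ gives
\[
\widetilde V_q(K,Q)>\frac{q}{nm}\int_{L\cap S^{n-1}}\varrho_{K|L}(u)^m\Bigl(\int_{\widetilde M_u}f\,d\HH^{n-m}\Bigr)du,
\]
while the cone formula gives exactly $\frac1n$ times that integral for $\widetilde C_q(K,Q;L\cap S^{n-1})$.

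Strictness comes from the singularity of $f$ at $o$: for small $\lambda$ the slice meets $\{f>\alpha\}$ with $\alpha>\sup_Mf$, because $o\in{\rm relint}(K\cap L^\perp)$. It does not come from an equality analysis involving a splitting of $K$. For $q<n$ even a direct sum $K=K_1\oplus K_2$ with $K_1\subset L$, $K_2\subset L^\perp$ gives strict inequality; the splitting equality case belongs to $q=n$.
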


\begin{coro}
\label{q0n-even-weighted}
Let $n\geq 2$, $q\in(0,n)$,
$Q\in\mathcal{K}^n_{o}$ be origin-symmetric.
Given a non-trivial, even, finite, Borel measure $\mu$ on $S^{n-1}$,
the weighted dual curvature measure equation,
\[
\widetilde{C}_{q}(K,Q;\cdot)=\mu,
\]
has an origin-symmetric convex body solution $K$ if and only if $\mu$ is not concentrated
on any great subsphere, and
\begin{equation}
\label{subspace-concentration-even}
\frac{\mu(L\cap S^{n-1})}{\mu(S^{n-1})} < \frac{{\rm dim}\,L}q
\end{equation}
for any proper linear subspace $L\subset \R^n$.
\end{coro}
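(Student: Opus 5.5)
The plan is to specialize the two $G$-invariant theorems to the group $G=\{{\rm Id},-{\rm Id}\}\subset O(n)$. This group is closed, and it has no non-zero fixed point, since $-x=x$ forces $x=o$. As noted in the Remark after Theorem~\ref{q01-G-weighted}, a convex body or star shaped set is $G$-invariant exactly when it is origin-symmetric, and a measure on $S^{n-1}$ is $G$-invariant exactly when it is even. Moreover, every linear subspace $L\subset\R^n$ satisfies $-L=L$, so every proper linear subspace is $G$-invariant. Finally, an origin-symmetric $Q\in\mathcal{K}^n_{o}$ lies in $\mathcal{S}^n_{o}$ and is $G$-invariant. I split into two cases according to $q$.

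\emph{Case $q\in(1,n)$.} Theorem~\ref{q0n-G-weighted} applies verbatim with this $G$. Its hypothesis \eqref{subspace-concentration-G} over $G$-invariant proper subspaces is then exactly \eqref{subspace-concentration-even} over all proper subspaces. A $G$-invariant solution $K$ is precisely an origin-symmetric solution. This gives both directions of the corollary in this range.

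\emph{Case $q\in(0,1]$.} Here I invoke Theorem~\ref{q01-G-weighted} with the same $G$. It says that an origin-symmetric solution exists if and only if $\mu$ is not concentrated on any great subsphere. It therefore suffices to show that, for $q\le 1$, this non-concentration condition already implies \eqref{subspace-concentration-even}; the converse direction is trivial because non-concentration appears in both characterizations. Let $L$ be a proper subspace, so $d={\rm dim}\,L\ge 1$; the case $L=\{o\}$ is trivial because $L\cap S^{n-1}=\emptyset$.
\begin{itemize}
\item If $d/q>1$, which happens when $q<1$ or when $d\ge 2$, the inequality holds because $\mu(L\cap S^{n-1})/\mu(S^{n-1})\le 1<d/q$.
\item If $q=1$ and $d=1$, the set $L\cap S^{n-1}$ is contained in a great subsphere, since $n\ge 2$. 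Non-concentration gives $\mu(L\cap S^{n-1})<\mu(S^{n-1})$, which is the required strict inequality with $d/q=1$.
\end{itemize}

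There is no real obstacle here. The only point needing care is the boundary value $q=1$ with one-dimensional $L$, where the strict inequality must come from the non-concentration hypothesis rather than from the trivial bound.
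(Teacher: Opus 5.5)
Your argument is correct and is the intended derivation. With $G=\{{\rm Id},-{\rm Id}\}$ every linear subspace is $G$-invariant, so Theorem~\ref{q0n-G-weighted} covers $q\in(1,n)$ and Theorem~\ref{q01-G-weighted} covers $q\in(0,1]$, where non-concentration already forces \eqref{subspace-concentration-even}, as you show (the only non-trivial case being $q={\rm dim}\,L=1$).

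One small correction: the case $L=\{o\}$ is not ``trivially'' satisfied, since the strict inequality would read $0<0$. It must instead be excluded by convention, as the paper does implicitly by taking ${\rm dim}\,L\in\{1,\ldots,n-1\}$.
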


For a general star body $Q\in\mathcal{S}^n_{o}$, we have the following sufficient condition for any $q>1$:

\begin{theo}
\label{q0-G-weighted-sufficient}
Let $n\geq 2$, $q>1$, $G\subset O(n)$ a closed subgroup without a non-zero fixed point, and
$Q\in\mathcal{S}^n_{o}$ be $G$-invariant.
Given a non-trivial, $G$-invariant, finite, Borel
measure $\mu$  on $S^{n-1}$,  the weighted dual curvature measure equation,
\[
\widetilde{C}_{q}(K,Q;\cdot) = \mu,
\]
has a $G$-invariant convex body solution
$K$ if $\mu$ is not concentrated on any great subsphere and  satisfies
\begin{equation}
\label{subspace-concentration-G-sufficient}
\frac{\mu(L\cap S^{n-1})}{\mu(S^{n-1})} < \frac{{\rm dim}\,L}q
\end{equation}
for any proper $G$-invariant linear subspace $L\subset \R^n$.
\end{theo}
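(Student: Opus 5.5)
We prove Theorem~\ref{q0-G-weighted-sufficient} by the variational method of Huang--Lutwak--Yang--Zhang \cite{HLYZ16} and B\"or\"oczky--Lutwak--Yang--Zhang--Zhao \cite{BLYZ19}, restricted to $G$-invariant bodies. For a $G$-invariant $K\in\mathcal{K}^n_{(o)}$, or equivalently its $G$-invariant support function $h$, consider
\[
\Phi(h)=\frac{1}{\mu(S^{n-1})}\int_{S^{n-1}}\log h\,d\mu-\frac1q\log\widetilde{V}_q([h],Q).
\]
Here $[h]$ is the Wulff shape of $h$. The functional is invariant under dilation. The weighted version of the variational formula for $\widetilde{V}_q(\cdot,Q)$ with respect to Wulff shapes, due to Lutwak--Yang--Zhang \cite{LYZ18}, gives
\[
\frac{d}{dt}\log\widetilde{V}_q([h_t],Q)\Big|_{t=0}=\frac{q}{\widetilde{V}_q(K,Q)}\int_{S^{n-1}}\frac{\varphi}{h_K}\,d\widetilde{C}_q(K,Q;\cdot).
\]
Since $G$ preserves $\mu$ and $Q$, minimizing $\Phi$ over $G$-invariant bodies gives stationarity against $G$-invariant perturbations $\varphi$. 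Averaging an arbitrary $\varphi$ over the Haar measure of $G$ then shows that $\widetilde{C}_q(K,Q;\cdot)=\lambda\mu$ as measures. Here we use that both $\mu$ and $\widetilde{C}_q(K,Q;\cdot)$ are $G$-invariant. Finally we rescale $K$ using $q$-homogeneity, $\widetilde{C}_q(sK,Q;\cdot)=s^q\widetilde{C}_q(K,Q;\cdot)$, which removes $\lambda$.

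The key step is to show that a minimizing sequence $K_i$, normalized for instance by $\widetilde{V}_q(K_i,Q)=\widetilde{V}_q(B^n,Q)$ or by a diameter condition, stays bounded with inradius bounded below. A $G$-invariant convex body contains the origin in its interior or relative interior, because its centroid is a fixed point of $G$. Choose John-type ellipsoids, or successive orthogonal frames $e_{1,i},\dots,e_{n,i}$ adapted to $K_i$, with lengths $a_{1,i}\ge\dots\ge a_{n,i}$, and pass to a subsequence where the frames converge. Suppose $K_i$ degenerates. Since $Q$ is merely star shaped with $rB^n\subset Q\subset RB^n$, we have the two-sided bounds
\[
R^{q-n}\widetilde{V}_q(K)\le\widetilde{V}_q(K,Q)\le r^{q-n}\widetilde{V}_q(K)
\]
for $q<n$, with the inequalities reversed for $q>n$. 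So up to constants, $\widetilde{V}_q(K,Q)$ behaves like the unweighted $\widetilde{V}_q(K)$. The estimate of \cite{BLYZ19} then bounds $\log\widetilde{V}_q(K_i)$ by $\sum_j c_j\log a_{j,i}$ plus a bounded error, where the weights $c_j$ are determined by $q$ and $j$. The integral $\int\log h_{K_i}\,d\mu$ is bounded below through $h_{K_i}(u)\ge c\max_j a_{j,i}|\langle u,e_{j,i}\rangle|$. Splitting the sphere into the sets where each $|\langle u,e_{j}\rangle|$ dominates, the subspace concentration inequality \eqref{subspace-concentration-G} on the limiting subspaces $L_k=\mathrm{span}\{e_1,\dots,e_k\}$ forces $\Phi(K_i)\to\infty$. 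The new point is that we only have \eqref{subspace-concentration-G} for $G$-invariant $L$. So the collapsing directions must be chosen $G$-invariantly. Because $K_i$ is $G$-invariant, its Legendre/inertia ellipsoid is $G$-invariant, and its eigenspaces are $G$-invariant subspaces. We therefore group the frame into $G$-invariant blocks according to comparable growth rates, and the filtration $L_k$ is then made of $G$-invariant subspaces, to which \eqref{subspace-concentration-G} applies. Any strict inequality in \eqref{subspace-concentration-G} leaves room, since the inequality is strict and there are only finitely many blocks.

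This blow-up estimate is the main obstacle. The hard part is to reproduce the delicate lower bound of \cite{BLYZ19} for $\widetilde{V}_q$ of a degenerating body when $q>1$, including $q\ge n$, with $G$-invariant blocks instead of single directions, and to control the boundary terms on the spherical caps near $L_k\cap S^{n-1}$. For that control I would first try the ``$\omega_\delta$'' neighbourhood argument of \cite{BLYZ19}, which handles these terms using only $\mu(L_k\cap S^{n-1})$ and outer regularity of $\mu$. The hypothesis that $\mu$ is not concentrated on a great subsphere guarantees $\int\log h\,d\mu>-\infty$ for bodies with $o$ in the interior. It also guarantees, via the previous bound, that the limit body $K$ has nonempty interior containing $o$. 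Once the minimizer exists, the Lagrange argument of the first paragraph completes the proof. Regularity of $Q$ is not needed, since only the two-sided bounds above and the variational formula of \cite{LYZ18} are used.
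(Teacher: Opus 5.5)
\textbf{Gap: the coercivity estimate is announced but not proved.} Your architecture is the paper's. You minimize the scale-invariant entropy $\int\log h\,d\mu-\frac1q\log\widetilde V_q(\cdot,Q)$ over $G$-invariant bodies. You get coercivity from the axes of a $G$-invariant ellipsoid grouped into $G$-invariant blocks, so that \eqref{subspace-concentration-G-sufficient} is only applied to $G$-invariant flags. You pass from $G$-invariant test functions to all test functions by Haar averaging (Lemma~\ref{Invariant-Measure}). The one step with real content is Proposition~\ref{q0-G-weighted-sufficient-limit}: $\mathcal{E}(C_m)\to\infty$ for $G$-invariant $C_m$ with $\widetilde V_q(C_m,Q)=1$ and ${\rm diam}\,C_m\to\infty$. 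You only announce it and say yourself you do not yet know how to prove it.

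\textbf{The missing volume bound.} What is missing is an upper bound for $\widetilde V_q$ of a body in terms of the half-axes $a_1\le\dots\le a_n$ of its John ellipsoid, valid for every $q>1$. The estimates of \cite{BLYZ19} are for $0<q<n$ only, so your plan as written does not cover $q\ge n$. That range is actually easier than you fear. For $q\ge n$ one has $\widetilde V_q(C,Q)\le\frac{q}{n}V(C)\max_{x\in C}\|x\|_Q^{q-n}\lesssim a_1\cdots a_{n-1}a_n^{q-n+1}$. The paper instead uses Haodi Chen's sharp box estimate (Proposition~\ref{qthIntrinsicBox}) together with $n^{-1}\Gamma_m\subset C_m\subset n\Gamma_m$ and $\varrho_Q^{n-q}\le\theta$. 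The blocks are $G$-irreducible pieces of the eigenspaces of the $G$-invariant John ellipsoid, so the axes are constant on each block. This gives, in block form, $\aleph\le (a_{k_\gamma})^{q-\sum_{\alpha<\gamma}d_\alpha}\prod_{\alpha<\gamma}(a_{k_\alpha})^{d_\alpha}$.

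\textbf{The missing bookkeeping.} You also need to turn this bound into $\int\log h_{C_m}\,d\mu\to\infty$.
First, compactness of the set of $G$-invariant $j$-dimensional subspaces gives uniform $\delta,\tau$ with $\mu(\Psi(L\cap S^{n-1},\delta))<j(1-\tau)/q$. This replaces your limiting subspaces and the outer-regularity argument.
Second, the paper partitions $S^{n-1}$ into sets $B^{(m)}_\alpha$ according to the last block carrying a coordinate $|\langle v,e^{(m)}_i\rangle|\ge\delta/n$, with all blocks from $\gamma$ on lumped together. Then $h_{C_m}\ge\frac{\delta}{n}a^{(m)}_{k_\alpha}$ on $B^{(m)}_\alpha$.
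Third, an Abel summation with $\beta_\alpha=\mu(B_\alpha)-d_\alpha\zeta$, $\zeta=(1-\tau)/q$, yields $\int\log h_{C_m}\,d\mu\ge\zeta\log\aleph+\tau\log a^{(m)}_{k_\gamma}+\log\frac{\delta}{n}$.
For integer $q\in\{2,\dots,n-1\}$ the sharp bound carries an extra factor $1+\log(a_{q+1}/a_q)$, which your sketch does not anticipate. It needs a separate argument (the paper's Case~4): with $R_m=\log(a^{(m)}_{q+1}/a^{(m)}_q)\to\infty$, the loss $\zeta\log R_m$ is beaten by the gain $\tau R_m$.

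\textbf{Two minor slips.} For $q<n$ the comparison constants are $r^{n-q}$ and $R^{n-q}$, not $R^{q-n}$ and $r^{q-n}$; this is harmless, since only comparability matters. Non-concentration on great subspheres is not what makes $\int\log h\,d\mu$ finite when $o\in{\rm int}\,K$. Its actual role, together with the absence of nonzero fixed points of $G$, is to make every open hemisphere $\mu$-positive. That gives \eqref{subspace-concentration-tau-n-1}, which handles the case where the small axes stay bounded below.
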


\begin{coro}
\label{q0-even-weighted-sufficient}
Let $n\geq 2$, $q>1$, and
$Q\in\mathcal{S}^n_{o}$ be origin-symmetric.
Given a non-trivial, even, finite, Borel
measure $\mu$  on $S^{n-1}$, the weighted dual curvature measure equation,
\[
\widetilde{C}_{q}(K,Q;\cdot) = \mu,
\]
has an origin-invariant convex body solution $K$ if $\mu$
is not concentrated on any great subsphere and satisfies
\begin{equation}
\label{subspace-concentration-even-sufficient}
\frac{\mu(L\cap S^{n-1})}{\mu(S^{n-1})} < \frac{{\rm dim}\,L}q
\end{equation}
for any proper linear subspace $L\subset \R^n$.
\end{coro}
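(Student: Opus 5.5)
The corollary follows from Theorem~\ref{q0-G-weighted-sufficient} by taking $G=\{{\rm Id},-{\rm Id}\}\subset O(n)$. This $G$ is a closed (finite) subgroup. It has no non-zero fixed point, since $-x=x$ forces $x=o$. So it meets the hypothesis on the group in Theorem~\ref{q0-G-weighted-sufficient}.

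Next I translate each hypothesis into $G$-invariance. A set is $G$-invariant exactly when it is origin-symmetric, so an origin-symmetric $Q\in\mathcal{S}^n_{o}$ is $G$-invariant. A Borel measure $\mu$ on $S^{n-1}$ is $G$-invariant exactly when $\mu(-\eta)=\mu(\eta)$ for all Borel $\eta$, that is, when $\mu$ is even.

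For the subspace condition, every linear subspace $L$ satisfies $-L=L$, so every linear subspace is $G$-invariant. The family of proper $G$-invariant subspaces in \eqref{subspace-concentration-G-sufficient} is therefore the family of all proper linear subspaces. Hence \eqref{subspace-concentration-even-sufficient} is exactly \eqref{subspace-concentration-G-sufficient} for this $G$. The assumption that $\mu$ is non-trivial and not concentrated on a great subsphere carries over unchanged.

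Theorem~\ref{q0-G-weighted-sufficient} now gives a $G$-invariant convex body $K$ with $\widetilde{C}_{q}(K,Q;\cdot)=\mu$. A $G$-invariant $K$ satisfies $-K=K$, so it is origin-symmetric, which is the conclusion (the statement's ``origin-invariant'' means origin-symmetric).

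The argument has no real obstacle. The only point to check is the observation that every linear subspace is invariant under $\pm{\rm Id}$, which is what makes the $G$-restricted subspace condition coincide with the unrestricted one.
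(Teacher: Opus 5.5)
Your proposal is correct and is exactly how the paper obtains this corollary: it specializes Theorem~\ref{q0-G-weighted-sufficient} to $G=\{{\rm Id},-{\rm Id}\}$, as indicated in the remark after Theorem~\ref{q01-G-weighted}, where $G$-invariance of $Q$, $K$ and $\mu$ amounts to origin-symmetry or evenness. Your observation that every linear subspace is invariant under $\pm{\rm Id}$ is the key check, since it makes the $G$-restricted subspace condition coincide with the unrestricted one.
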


On the other hand, extending the estimate due to Henk-Pollehn \cite{HeP18}, we will show a concentration property
for the dual curvature measure $\widetilde C_q(K,Q; \cdot)$ when $Q$ is a polar $L_{q-n}$ zonoid, $q\ge n+1$.
We recall that for  $p\ge 1$ and
a finite even Borel measure $\mu$ on $S^{n-1}$ not concentrated on any great subsphere,
the $L_p$ zonoid $Z_p(\mu)$ associated with $\mu$ (see e.g. Ball \cite{Bal91a,Bal91b}, Barthe \cite{Bar98,Bar04},
 Campi-Gronchi \cite{CaG06}, Dai-Wang \cite{DaT26+}, Lutwak-Yang-Zhang \cite{LYZ00}, Milman-Shabelman-Yehudayoff \cite{MSY25},
 Schneider-Weil \cite{ScW83}) is defined by
$$
h_{Z_p(\mu)}(v)^p=\int_{S^{n-1}}|\langle u,v\rangle|^p\,d\mu(u).
$$
We note that any such even measure $\mu$ is the $L_p$ surface area measure of some origin-symmetric convex body $K\subset \R^n$ (cf. Lutwak \cite{Lut93a}), and then $Z_p(\mu)$ is the $L_p$ projection body $\Pi_pK$ of $K$.
In addition,
$Z_p^*(\mu)$ is the polar $L_p$ zonoid defined by the formula
\begin{equation}
\label{polarLp-zonoid}
\|x\|_{Z_p^*(\mu)}^p=\int_{S^{n-1}}|\langle x,u\rangle|^p\,d\mu(u),\qquad x\in\R^n,
\end{equation}
which is just the polar of $Z_p(\mu)$ in the classical sense.
It follows from Lewis \cite{Lew78} (see also
 Lutwak-Yang-Zhang \cite{LYZ04,LYZ05}) that any $n$-dimensional
subspace of $L_p$ is isometric to
$\|\cdot\|_{Z_p^*(\mu)}$ for some isotropic even measure $\mu$ on $S^{n-1}$.

\begin{theo}
\label{qn+1-Lpzonoid-weighted}
Let $n\geq 2$ and $q\geq n+1$. If $K,Q\in\mathcal{K}^n_{o}$ where $Q$ is a polar $L_{q-n}$ zonoid, then
\begin{equation}
\label{subspace-concentration-G}
\frac{\widetilde{C}_{q}(K,Q;L\cap S^{n-1})}{\widetilde{C}_{q}(K,Q;S^{n-1})} < \frac{q-n+{\rm dim}\,L}q
\end{equation}
for any proper linear subspace $L\subset \R^n$.
\end{theo}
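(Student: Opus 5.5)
The plan is to reduce to a family of weighted cone-volume inequalities, one for each direction in the support of the generating measure, and then integrate over that measure. Set $p=q-n\ge 1$ and let $\mu$ be the even measure with $Q=Z_p^*(\mu)$, so that $\|x\|_Q^{p}=\int_{S^{n-1}}|\langle x,v\rangle|^p\,d\mu(v)$.

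First I would write $\widetilde C_q(K,Q;\cdot)$ as a weighted volume over cones. From the definition, $\widetilde C_q(K,Q;\eta)=\frac1n\int_{\nu_K^{-1}(\eta)}\langle x,\nu_K(x)\rangle\,\|x\|_Q^{q-n}\,d\HH^{n-1}(x)$. For a weight $f$ that is positively homogeneous of degree $p$, polar coordinates give
$$
\int_{C_\eta}f\,dx=\frac1{n+p}\int_{\nu_K^{-1}(\eta)}\langle x,\nu_K(x)\rangle f(x)\,d\HH^{n-1}(x),
$$
where $C_\eta=\{tx:\,x\in\nu_K^{-1}(\eta),\,t\in[0,1]\}$. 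Since $n+p=q$, this yields
$$
\widetilde C_q(K,Q;\eta)=\frac qn\int_{S^{n-1}}\int_{C_\eta}|\langle x,v\rangle|^p\,dx\,d\mu(v)
$$
by Fubini. Both sides of the desired inequality are therefore $\mu$-averages over $v$. It then suffices to prove, for every $v\in S^{n-1}$ and $d=\dim L$,
$$
\int_{C_L}|\langle x,v\rangle|^p\,dx<\frac{p+d}{n+p}\int_K|\langle x,v\rangle|^p\,dx,
$$
with $C_L$ the cone over $\nu_K^{-1}(L\cap S^{n-1})$. If this holds, integrating against $\mu$ gives the strict inequality, since $\mu$ is nontrivial and both sides are positive.

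Next I would describe $C_L$ by fibers. Write $x=y+z$ with $y\in L$ and $z\in L^\perp$. A boundary point whose normal lies in $L$ projects to $\partial(K|L)$, and the set of such points over $y\in\partial(K|L)$ is the full fiber $F_y=K\cap(y+L^\perp)$. Hence $C_L=\bigcup_{y\in\partial(K|L)}\bigcup_{s\in[0,1]}sF_y$, and its fiber over $sy$ is $sF_y$. By convexity of $K$ and $o\in K$, the fiber of $K$ over $sy$ contains $sF_y+(1-s)K_0$, where $K_0=K\cap L^\perp$. In polar coordinates on $L$, each ray $\{sy\}$ with $y\in\partial(K|L)$ carries the density $s^{d-1}$ times a geometric factor independent of $s$. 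So both integrals become integrals over $y$ of one-dimensional integrals in $s$ of the weight over the fibers $sF_y$, respectively $sF_y+(1-s)K_0$.

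The main obstacle is the resulting one-variable inequality. For fixed $y$ I must show
$$
\int_0^1 s^{d-1}\int_{sF_y}w\,ds\le\frac{p+d}{n+p}\int_0^1 s^{d-1}\int_{sF_y+(1-s)K_0}w\,ds,
$$
with strictness after integrating over $y$, where $w(x)=|\langle x,v\rangle|^p$. On the left, the homogeneity of $w$ and of $sF_y$ gives the factor $\int_0^1 s^{d-1+(n-d)+p}\,ds=\frac1{n+p}$ times $\int_{F_y}w$, which is where the ratio $\frac{p+d}{n+p}$ comes from. On the right, I would try to bound $\int_{sF_y+(1-s)K_0}w$ from below by $s^{n-d+p}\int_{F_y}w$ plus a contribution from $(1-s)K_0$. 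I would use translates: the fiber over $sy$ contains $sF_y+(1-s)z_0$ for any $z_0\in K_0$. I would combine this with the convexity of $w$ when $p\ge1$, via a Jensen/Brunn--Minkowski-type averaging over $z_0$, so as to replace the $(1-s)$-part by a term of order $s^{n-d}$ times $\int_{F_y}w$ rather than $s^{n-d+p}$. This is exactly where $p\ge1$, that is $q\ge n+1$, should enter, as in Henk--Pollehn, and I expect it to be the delicate step.

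Strictness should come from the fact that $C_L$ misses a set of positive $w$-weighted measure in $K$, since $K$ has nonempty interior. Finally, integrating the fiberwise inequality over $y\in\partial(K|L)$ and then over $v$ against $\mu$ concludes the proof.
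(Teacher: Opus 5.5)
Your set-up follows the same outline as the paper's proof. You write both sides as $\mu$-averages of $\frac qn\int|\langle x,v\rangle|^p\,dx$, describe $\nu_K^{-1}(L\cap S^{n-1})$ as the union of the full fibers $F_y$ over $y\in\partial(K|L)$, and evaluate the cone side ray by ray as $\frac1{n+p}\int_{F_y}w$. The gap is the step you call delicate. That step is the entire content of the theorem, and it cannot be closed the way you propose.

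You only use $o\in K$ and the central section $K\cap L^\perp$, and for a general $K\in\mathcal{K}^n_o$ the inequality is false. Take $n=2$ and $q=3$, so $p=1$. Take $Q=B^2$, a polar $L_1$ zonoid generated by a multiple of the uniform measure on $S^1$. Let $L=\R e_1$ and $K_\epsilon={\rm conv}\{(1,\pm1),(-\epsilon,\pm\epsilon)\}$.
\begin{itemize}
\item The cone over the face $F(K_\epsilon,e_1)$ is the triangle $T={\rm conv}\{o,(1,1),(1,-1)\}$. So by \eqref{dual-curvature-integral}, $\widetilde C_3(K_\epsilon,B^2;L\cap S^1)\geq\frac32\int_T\|x\|\,dx$.
\item Meanwhile $\widetilde C_3(K_\epsilon,B^2;S^1)=\frac32\int_{K_\epsilon}\|x\|\,dx\to\frac32\int_T\|x\|\,dx$ as $\epsilon\to0$.
\item Hence the ratio tends to $1>\frac23=\frac{q-n+\dim L}{q}$.
\end{itemize}
Here $K_\epsilon\cap L^\perp$ shrinks to $\{o\}$, and the fibers of $K_\epsilon$ over $se_1$ are essentially $sF_{e_1}$. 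So no averaging over translates $sF_y+(1-s)z_0$ can produce the needed gain.

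What is missing is origin symmetry of $K$. This is the setting of Henk--Pollehn, and the paper's argument relies on it tacitly even though the displayed statement omits it. The sets $\Gamma_u$, i.e. ${\rm conv}\{\widetilde M_u,-\widetilde M_u\}$, to which Corollary~\ref{Henk-Pollehn-cor} is applied, lie in $K$ only when $K=-K$.

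With $K=-K$, the fiber of $K$ over $sy$ contains ${\rm conv}\{F_y,-F_y\}\cap(sy+L^\perp)=\frac{1+s}2F_y+\frac{1-s}2(-F_y)$. Apply Lemma~\ref{Henk-Pollehn} to the pair $F_y,-F_y$ with $\lambda=\frac{1-s}2$. The two slices on the left of \eqref{Henk-Pollehn-eq} are reflections of each other and $w$ is even, so you get $\int_{K\cap(sy+L^\perp)}w\geq s^p\int_{F_y}w$. This is where $p\geq1$ enters.

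The exponent must be $p$, not $n-d$ as you wrote. The identity $\int_0^1s^{d-1+p}\,ds=\frac1{d+p}$ is exactly what turns the ray-wise comparison into the factor $\frac{p+d}{n+p}$. A bound of order $s^{n-d}$ would only give $\frac1n$, which is too weak when $p+d<n$.

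Your strictness argument also falls short. That the cone $C_L$ misses part of $K$ only shows the ratio is below $1$. Strictness below $\frac{p+d}{n+p}$ has to come from strictness in the fiber bound. For instance, for small $s$ the fiber contains $\frac12F_y-\frac12F_y$. Its $w$-integral stays positive when $F_y$ is $(n-d)$-dimensional, while $s^p\int_{F_y}w\to0$. If $F_y$ is lower dimensional for almost every $y$, the left side vanishes and strictness is trivial.
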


\section{Preliminaries}
For more detailed information on the theory of convex bodies, we refer to  Gruber \cite{Gru07} and Schneider \cite{Sch14}.

Our setting is the Euclidean $n$-space $\R^n$ for $n\geq 2$. We write $o$ to denote the origin in $\R^n$, $\langle\cdot,\cdot\rangle$ for the standard inner product, and $\|\cdot\|$ for its induced norm. We denote the unit ball  by $B^n=\{x\in\R^n:\|x\|\leq 1\}$ and the unit sphere by $S^{n-1}=\partial B^n$. $\HH^k(\cdot)$ stands for the $k$-dimensional Hausdorff measure
 normalized in a way that it coincides with the Lebesgue measure on $\R^k$, and we use the notation $V(\cdot)$ for the $n$-dimensional volume (Lebesgue measure). In particular, the volume of the unit ball is denoted as $\kappa_n=V(B^n)=\frac{\pi^{\frac n2}}{\Gamma(\frac n2 +1)}$ and its surface area is  $\HH^{n-1}(S^{n-1})=n\kappa_n$, where $\Gamma$ is Euler's gamma function. We call a compact convex set $K\subset\R^n$ with non-empty interior a convex body.  We use the symbol  $\mathcal{K}^n_{o}$ to denote the family of all convex bodies $K$ which contain $o$ in their interior, that is, $o\in{\rm int}\,K$.

For a convex body $K\subset\R^n$, the support function $h_K(u):S^{n-1}\to \R$ is defined as $h_K(u)=\max \{\langle x,u\rangle : x\in K\}$. One observes that
\begin{equation}
\label{SupportFunctionTranslate}
h_{K-z}(u)=h_K(u)-\langle u,z\rangle
\end{equation}
for any compact convex set $K\subset\R^n$ and $z,u\in\R^n$. If $u\in S^{n-1}$, then the face of $K$ with exterior unit normal $u$ is given by
$F(K,u)=\{x\in K: \langle x,u\rangle=h_K(u) \}$.
For $x\in{\partial} K$, let the spherical image of $x$ be defined as
${\pmb\nu}_K(\{x\})=\{u\in S^{n-1}: h_K(u)=\langle x,u\rangle\}$. For a Borel set $\eta\subset S^{n-1}$, the reverse spherical image is defined as
$$
{\pmb\nu}_K^{-1}(\eta)=\{x\in{\partial} K:\, {\pmb\nu}_K(x)\cap \eta\neq \emptyset\}=\cup_{u\in\eta}F(K,u).
$$
If $K$ has a unique supporting hyperplane at $x$, then we say that $K$ is smooth at $x$, and in this case ${\pmb\nu}_K(\{x\})$ contains exactly one element that we denote by $\nu_K(x)$ and call it the exterior unit normal of $K$ at $x$.

\section{The dual intrinsic volume and dual curvature measure}

Following Huang-Lutwak-Yang-Zhang \cite{HLYZ16} and Lutwak-Yang-Zhang \cite{LYZ18}, if $K\in\mathcal{K}^n_{o}$ and $\eta\subset S^{n-1}$ is a Borel set,
then the reverse radial Gauss image of $\eta$ is given by
\begin{align*}
{\pmb\alpha}^*_K(\eta)&=\{u\in S^{n-1}: \varrho_K(u)u\in F(K,v)\text{ for some }v\in\eta\}\\
&=
\{u\in S^{n-1}: \varrho_K(u)u\in{\pmb\nu}_K^{-1}(\eta)\}.
\end{align*}
It is Lebesgue measurable according to \cite[Lemma~2.2.4]{Sch14}.
 For a convex body $K\in\mathcal{K}_{o}^n$ and $q\neq 0$, the weighted $q$th dual curvature measure $\widetilde{C}_q(K,Q;\cdot)$ with respect to $Q\in\mathcal{S}_o^n$ is a Borel measure on $S^{n-1}$ and is defined in the paper by Lutwak-Yang-Zhang  \cite{LYZ18} (cf. \cite{HLYZ16} in the case $Q=B^n$) as
\begin{equation}\label{dualcurvmeasure}
\widetilde{C}_q(K,Q;\eta)=\frac1n\int_{{\pmb\alpha}^*_K(\eta)}\varrho_K^{q}(u)\varrho^{n-q}_Q(u)\HH^{n-1}(du),
\end{equation}
and hence
$$
\widetilde{V}_q(K,Q)=\widetilde{C}_q(K,Q;S^{n-1}).
$$
In addition, for a Borel set $\eta\subset S^{n-1}$, using the Borel set
$$
\Xi(\eta)=\bigcup\{{\rm conv}\{o,x\}:\,x\in {\pmb\nu}_K^{-1}(\eta)\}
$$
and polar coordinates yields that
\begin{equation}
\label{dual-curvature-integral}
\widetilde{C}_q(K,Q;\eta)=\frac{q}n\int_{\Xi(\eta)}\|x\|_Q^{q-n}\,dx.
\end{equation}

Based on work by Huang-Lutwak-Yang-Zhang \cite{HLYZ16}, Lutwak-Yang-Zhang \cite{LYZ18}
proved the variational formula for the weighted dual intrinsic volume. Namely,
for $q\neq 0$, $K\in\mathcal{K}_{o}^n$, $Q\in\mathcal{S}_o^n$, and
continuous $\varphi:S^{n-1}\to\R$, if $K_t$ is the corresponding Wulff-shape
$$
K_t=\{x\in\R^n:\,\langle x,u\rangle\leq h_K(u)+t\varphi(u)\,\;\forall u\in S^{n-1}\}
$$
when $|t|$ is small, then
\begin{equation}
\label{dualAlexandrov}
\lim_{t\to 0}\frac{\widetilde{V}_q(K_t,Q)-\widetilde{V}_q(K,Q)}{t}=q\int_{S^{n-1}}\frac{\varphi(u)}{h_K(u)}\,d \widetilde{C}_q(K,Q;u).
\end{equation}
According to  Lemma~5.1 in \cite{LYZ18},
 if $q\neq 0$ and the Borel function $g:\,S^{n-1}\to \R$ is bounded, then
\begin{equation}
\label{intgCqoin}
\int_{S^{n-1}}g(u)\,d\widetilde{C}_{q}(K,Q;u)
=\frac1n\int_{\partial' K} g(\nu_K(x))\langle \nu_K(x),x\rangle\|x\|_Q^{q-n}\,d\HH^{n-1}(x)
\end{equation}
where $\|x\|_Q=\min\{\lambda\geq 0:\, x\in \lambda Q\}$ is a measurable and $1$-homogeneous function
satisfying $\|x\|_Q>0$ for $x\neq o$.
The advantage of introducing the star body $Q$ is also apparent in the  equiaffine invariant formula
(see Theorem~6.8 in \cite{LYZ18}), stating that
if $\Phi\in{\rm SL}(n,\R)$, then
\begin{equation}
\label{Cqaffineinv0}
\int_{S^{n-1}}g(u)\,d\widetilde{C}_{q}(\Phi K,\Phi Q;u)=
\int_{S^{n-1}} g\left(\frac{\Phi^{-t} u}{\|\Phi^{-t} u\|}\right)d\widetilde{C}_{q}( K,Q;u),
\end{equation}
{where $\Phi^{-t}$ denotes the transpose of the inverse of $\Phi$.}

Lutwak-Yang-Zhang  \cite{LYZ18} proved
that the $L_p$ $q$th weighted curvature measure is also weakly continuous.

\begin{lemma}[Lutwak, Yang, Zhang  \cite{LYZ18}]
\label{CpqcontQ}
For $q\neq 0$, $p\in\R$ and $Q\in\mathcal{S}_o^n$, if $\{K_m\}_{m\in\N}$ tends to $K$ for
compact convex sets $K_m,K\subset\R^n$ containing $o$, then
$\widetilde{V}_{q}(K_m,Q)$ tends to $\widetilde{V}_{q}(K,Q)$, and
$\widetilde{C}_{p,q}(K_m,Q;\cdot)$ tends weakly to $\widetilde{C}_{p,q}(K,Q;\cdot)$.
\end{lemma}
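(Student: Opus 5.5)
The plan is to reduce both assertions to dominated convergence, using the integral representations \eqref{dualintrvolQ}, \eqref{dualcurvmeasure} and \eqref{dual-curvature-integral}. I treat the main case $o\in{\rm int}\,K$ first; then $o\in{\rm int}\,K_m$ for large $m$. In this case there are $0<r<R$ with $rB^n\subset K_m,K\subset RB^n$ for all large $m$. A standard fact then gives $\varrho_{K_m}\to\varrho_K$ uniformly on $S^{n-1}$, and likewise $h_{K_m}\to h_K$ uniformly.

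For the dual intrinsic volume, I use the spherical form of \eqref{dualintrvolQ}:
$$
\widetilde V_q(K_m,Q)=\frac1n\int_{S^{n-1}}\varrho_{K_m}^q\varrho_Q^{n-q}\,d\HH^{n-1}.
$$
The integrands converge uniformly. They are dominated by $\max\{r^q,R^q\}\,\varrho_Q^{n-q}$, which is integrable because $\varrho_Q$ is bounded between two positive constants. This gives $\widetilde V_q(K_m,Q)\to\widetilde V_q(K,Q)$.

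For weak convergence of the measures, fix a continuous $g$ on $S^{n-1}$. By \eqref{dualcurvmeasure},
$$
\int g\,d\widetilde C_q(K_m,Q;\cdot)=\frac1n\int_{S^{n-1}} g(\alpha_{K_m}(u))\,\varrho_{K_m}^q(u)\varrho_Q^{n-q}(u)\,d\HH^{n-1}(u).
$$
Here $\alpha_K(u)$ denotes the unique outer normal at $\varrho_K(u)u$. It is defined for $\HH^{n-1}$-a.e. $u$, since the non-smooth boundary points have $\HH^{n-1}$-measure zero and radial projection is bi-Lipschitz when $o\in{\rm int}\,K$.

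The key step is the pointwise statement: if $\varrho_K(u)u$ is a smooth point of $\partial K$, then $\alpha_{K_m}(u)\to\alpha_K(u)$. I argue as follows.
\begin{itemize}
\item Pick any normals $v_m\in{\pmb\nu}_{K_m}(\{\varrho_{K_m}(u)u\})$; by compactness of $S^{n-1}$, any subsequence has a further convergent subsequence $v_m\to v$.
\item Each $v_m$ satisfies $\langle \varrho_{K_m}(u)u,v_m\rangle=h_{K_m}(v_m)$.
\item Passing to the limit using uniform convergence of $\varrho_{K_m}$ and $h_{K_m}$ gives $\langle\varrho_K(u)u,v\rangle=h_K(v)$, so $v\in{\pmb\nu}_K(\{\varrho_K(u)u\})=\{\alpha_K(u)\}$.
\end{itemize}
Hence the integrands converge a.e. They are dominated by $\|g\|_\infty\max\{r^q,R^q\}\varrho_Q^{n-q}$, and dominated convergence finishes the proof. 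For the $L_p$ version $d\widetilde C_{p,q}=h_K^{-p}\,d\widetilde C_q$, I insert the factor $h_{K_m}(\alpha_{K_m}(u))^{-p}$. This factor converges a.e. by the same argument together with uniform convergence of $h_{K_m}$, and it is uniformly bounded because $h_{K_m}\ge r$.

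The step I expect to be the main obstacle is the degenerate situation $o\in\partial K$ (or $K$ lower-dimensional), which the statement's wording formally allows. There $\varrho_K$ can vanish and the radial parametrization breaks down. In that case I would assume $q>0$, since for $q<0$ the quantities may be infinite and the statement is only meaningful with $o\in{\rm int}\,K$. I would then switch to the solid representations \eqref{dualintrvolQ} and \eqref{dual-curvature-integral}. The weight $\|x\|_Q^{q-n}$ is comparable to $\|x\|^{q-n}$, hence locally integrable on $\R^n$ for $q>0$. Also $\mathbf 1_{K_m}\to\mathbf 1_K$ a.e. (off $\partial K$), all sets lie in a fixed ball, and so dominated convergence gives the volume statement.

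For the measures in the degenerate case, I would write $\int g\,d\widetilde C_q(K_m,Q;\cdot)=\frac qn\int_{K_m}g(\alpha_{K_m}(x/\|x\|))\|x\|_Q^{q-n}dx$. I would then repeat the normal-cone closedness argument at points $x\in{\rm int}\,K$ whose radial boundary point is smooth. The remaining set of $x$ outside $\partial K$ should be null, since radial projection from an interior point is locally bi-Lipschitz. Checking this null-set claim carefully when $o\in\partial K$ is the delicate point, and I would handle it by slicing along rays and using that non-smooth points of $\partial K$ form an $\HH^{n-1}$-null set.
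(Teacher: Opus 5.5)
For $K_m,K\in\mathcal K^n_o$ your argument is correct. It is essentially the standard proof in the cited work of Lutwak--Yang--Zhang (building on Huang--Lutwak--Yang--Zhang), which the paper only quotes without proof: uniform convergence of $\varrho_{K_m}$ and $h_{K_m}$, a.e.\ convergence of the radial Gauss map via closedness of normal cones, and dominated convergence, with the factor $h_{K_m}^{-p}$ bounded and uniformly convergent.

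Your degenerate case is unnecessary, since the paper applies the lemma only with $K_m,K\in\mathcal K^n_o$. It also cannot be completed for $p\neq 0$ as the statement is worded. Take $K\subset\R^2$ a unit disk with $o\in\partial K$, and let $\theta$ be the angle of $v\in S^1$ from the outer normal at $o$. Then $h_K(v)^{-p}\,d\widetilde C_q(K,B^2;v)$ has density of order $|\theta|^{q-2p}$, hence infinite mass once $p\geq\frac{q+1}{2}$. So the lemma should be read with $K_m,K\in\mathcal K^n_o$, and there your proof is complete.
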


\section{An upper bound for the dual intrinsic volume}
\label{secDualIntVolumeBS}

In this section, we survey some results and ideas in
Haodi Chen's paper \cite{HaodiChen}, providing the upper bound (see Proposition~\ref{qthIntrinsicBox})
on  the $q$th dual intrinsic volume for $q>0$.
The following essentially optimal result improves earlier estimates by  Yiming Zhao \cite{Zha18} and
 B\"or\"oczky-Lutwak-Yang-Zhang-Zhao \cite{BLYZ19}.

\begin{prop}[Haodi Chen]
\label{qthIntrinsicBox}
For $n\geq 2$ and  $q>0$, there exists $\widetilde{C}(n,q)>1$ depending only on $n,q$ with the following properties:
If $0<a_1\leq\ldots\leq a_n$ and
$e_1,\ldots,e_n$ form an orthonormal basis of $\R^n$,
 then the rectangular box $R=\sum_{i=1}^n[-a_1,a_1]e_i$
satisfies that
$$
\widetilde{V}_q(R)\leq \widetilde{C}(n,q)\cdot
\left\{
\begin{array}{rl}
a_1^q&\mbox{if }0<q<1 \\
a_1\ldots a_i a_{i+1}^{q-i}&\mbox{if } i<q<i+1,\;i=1,\ldots,n-2\\
a_1\ldots a_{n-1} a_{n}^{q-n+1}&\mbox{if } q>n-1\\
a_1\ldots a_q\left(1+\log \frac{a_{q+1}}{a_q}\right)&\mbox{if }q=1,\ldots,n-1
\end{array}.
\right.
$$
In addition, replacing $\widetilde{C}(n,q)$ by $\widetilde{C}(n,q)^{-1}$, we obtain a lower bound
for $\widetilde{V}_q(R)$.
\end{prop}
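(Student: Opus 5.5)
The plan is to reduce $\widetilde{V}_q(R)$ to a one-dimensional Stieltjes integral over the sup-norm, compute it explicitly piece by piece, and then compare the resulting terms with the claimed right-hand side. I read the box as $R=\sum_{i=1}^n[-a_i,a_i]e_i$; the $a_1$ in every summand is presumably a typo, since otherwise $R$ is a cube and the $a_i$ play no role. By rotation invariance we may take $e_i$ to be the standard basis. By \eqref{dualintrvol}, $\widetilde{V}_q(R)=\frac qn\int_R\|x\|^{q-n}dx$.

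\textbf{Reduction to the sup-norm.} Since $\|x\|_\infty\le\|x\|\le\sqrt n\,\|x\|_\infty$, replacing $\|x\|$ by $\|x\|_\infty$ changes the integrand by a factor between $1$ and $n^{|q-n|/2}$. Hence, up to constants depending only on $n,q$,
$$\widetilde{V}_q(R)\asymp\int_R\|x\|_\infty^{q-n}\,dx .$$

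\textbf{Computing the sup-norm integral.} Set $f(t)=V(R\cap tB_\infty^n)=2^n\prod_{j=1}^n\min(a_j,t)$ for $t\in[0,a_n]$. This function is continuous and piecewise smooth. With $a_0=0$, on $[a_i,a_{i+1}]$ we have
$$f(t)=2^na_1\cdots a_i\,t^{n-i},\qquad f'(t)=2^n(n-i)\,a_1\cdots a_i\,t^{n-i-1}.$$
The layer-cake formula gives $\int_R\|x\|_\infty^{q-n}dx=\int_0^{a_n}t^{q-n}f'(t)\,dt$. This equals $2^n\sum_{i=0}^{n-1}(n-i)\,a_1\cdots a_i\,I_i$, where $I_i=\int_{a_i}^{a_{i+1}}t^{q-i-1}dt$. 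The $i=0$ term converges because $q>0$. Each $I_i$ is elementary:
\begin{itemize}
\item if $q>i$, then $I_i\le \frac{1}{q-i}\,a_{i+1}^{q-i}$;
\item if $q<i$, then $I_i\le \frac{1}{i-q}\,a_i^{q-i}$;
\item if $q=i$, then $I_i=\log(a_{i+1}/a_i)$.
\end{itemize}

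\textbf{Upper bound.} Set $T_i=a_1\cdots a_i\,a_{i+1}^{q-i}$ for $0\le i\le n-1$. A term with $q<i$ is bounded by $a_1\cdots a_{i-1}a_i^{q-i+1}=T_{i-1}$. Moreover $T_i/T_{i-1}=(a_{i+1}/a_i)^{q-i}$, which is $\ge1$ when $i<q$ and $\le1$ when $i>q$. So the sequence $T_i$ increases up to index $\lfloor q\rfloor$ and decreases afterwards, with the index capped at $n-1$.
\begin{itemize}
\item For non-integer $q<n-1$, every term is bounded by $T_{\lfloor q\rfloor}$. This gives $a_1^q$ when $q<1$ and $a_1\cdots a_i a_{i+1}^{q-i}$ when $i<q<i+1$.
\item For $q>n-1$, every index satisfies $i<q$, so the maximum is $T_{n-1}=a_1\cdots a_{n-1}a_n^{q-n+1}$.
\item For integer $q=i$, all terms other than the $i$-th are bounded by $T_i=T_{i-1}=a_1\cdots a_i$. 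The $i$-th term is exactly $a_1\cdots a_i\log(a_{i+1}/a_i)$. Together these give $a_1\cdots a_q\bigl(1+\log\frac{a_{q+1}}{a_q}\bigr)$.
\end{itemize}
Summing at most $n$ terms costs only a constant factor.

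\textbf{Lower bound.} Lower bounds cannot be read off the $I_i$ directly, because, for example, $a_{i+1}^{q-i}-a_i^{q-i}$ may be tiny when $a_{i+1}\approx a_i$. This is the main obstacle. Instead I will bound the integral geometrically.
\begin{itemize}
\item \emph{The case $q<n$, fixed $k\le n-1$.} $R$ contains the box $R_k=\sum_j[-\min(a_j,a_{k+1}),a_{k+1}]e_j$. This box has volume $2^na_1\cdots a_k\,a_{k+1}^{n-k}$, and on it $\|x\|\le\sqrt n\,a_{k+1}$. Since $q-n<0$, the integrand is at least $(\sqrt n\,a_{k+1})^{q-n}$ there, so $\widetilde{V}_q(R)\gtrsim T_k$ for every $k$. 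Taking $k=\lfloor q\rfloor$ covers all non-integer cases with $q<n-1$.
\item \emph{The case $q\ge n$ (and $q\in(n-1,n)$ as part of the above).} Restrict the integral to $R\cap\{|x_n|\ge a_n/2\}$. There $\|x\|\asymp a_n$, and this set has volume $\asymp a_1\cdots a_n$. This gives $\gtrsim T_{n-1}$.
\item \emph{Integer $q=i$.} We already have $\gtrsim T_i=a_1\cdots a_i$ from the box $R_i$. For the logarithm, I will use the exact formula: the $i$-th summand is $2^n(n-i)\,a_1\cdots a_i\log(a_{i+1}/a_i)$, and all summands are nonnegative. Then $\max(T_i,\ a_1\cdots a_i\log\frac{a_{i+1}}{a_i})\ge\frac12\,a_1\cdots a_i\bigl(1+\log\frac{a_{i+1}}{a_i}\bigr)$.
\end{itemize}
Combined with the sup-norm comparison, this yields the lower bound with constant $\widetilde{C}(n,q)^{-1}$.
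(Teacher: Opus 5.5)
The paper contains no proof of this proposition to compare with: it is imported from Haodi Chen's paper and only applied afterwards. Your argument therefore has to stand on its own, and it does. It is a complete, elementary proof.

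\textbf{Upper bound.} The exact formula $\int_R\|x\|_\infty^{q-n}\,dx=2^n\sum_{i=0}^{n-1}(n-i)\,a_1\cdots a_i\,I_i$ is correct. The upper bound then follows, as you say, from the unimodality of $T_i$: the ratio $T_i/T_{i-1}=(a_{i+1}/a_i)^{q-i}$ switches from $\ge 1$ to $\le 1$ as $i$ passes $q$.

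\textbf{Lower bound.} You correctly avoid the cancellation in $a_{i+1}^{q-i}-a_i^{q-i}$. For $q<n$ you test on $R\cap a_{k+1}[-1,1]^n$, where $\|x\|^{q-n}\ge(\sqrt n\,a_{k+1})^{q-n}$. For $q\ge n$ you use the slab $\{|x_n|\ge a_n/2\}$. For the logarithmic term at integer $q$ you use the nonnegativity of every summand.

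A by-product of your route is that all constants are explicit. The factors $1/|q-i|$ show how $\widetilde{C}(n,q)$ depends on the distance from $q$ to the integers $1,\ldots,n-1$, which is exactly where the logarithmic case takes over.

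You are right that the box in the statement should read $\sum_{i=1}^n[-a_i,a_i]e_i$; that is how it is used later in the paper.

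One slip of your own: the sub-box must be $R_k=\sum_j[-\min(a_j,a_{k+1}),\min(a_j,a_{k+1})]e_j$. With right endpoint $a_{k+1}$, as you wrote it, the box is not contained in $R$ when $a_j<a_{k+1}$. Your volume $2^na_1\cdots a_k\,a_{k+1}^{n-k}$ is already that of the symmetric box, so nothing downstream changes.
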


In the rest of the section, we discuss some ways how to use Proposition~\ref{qthIntrinsicBox}.
Let  $G\subset O(n)$ be a closed subgroup without a non-zero fixed point, and let $K\in\mathcal{K}^n_{o}$ be invariant under $G$.
We write $E\subset K$ to denote the unique maximal volume John ellipsoid contained in $K$ (cf. Schneider \cite{Sch14}). Being unique, $E$ is also invariant under $G$, acting without a non-zero fixed point, and hence the origin $o$ is the center of $E$. It follows that (cf. Schneider \cite{Sch14})
\begin{equation}
\label{EellipsoidnE}
E\subset K\subset nE.
\end{equation}
In particular, if $e_1,\ldots,e_n$ form the
orthonormal basis  of $\R^n$ corresponding to the principal directions of $E$,  and
$a_1,\ldots,a_n>0$ are the corresponding half axes of $E$, then
the rectangular box
$\Gamma=\sum_{i=1}^n[-a_i,a_i]e_i$ satisfies that
\begin{equation}
\label{RboxBigR}
 n^{-1}\Gamma\subset K\subset n\Gamma.
\end{equation}
A consequence of Proposition~\ref{qthIntrinsicBox} is a simple estimate about the inradius, which follows from \eqref{RboxBigR}, Proposition~\ref{qthIntrinsicBox}, and the $q$-homogeneity of the $q$th intrinsic volume.

\begin{lemma}
\label{inradiusVq}
For $q>0$, $R,c>1$, and $n\geq 2$, there exists $\xi=\xi(n,q,R,c)>0$ such that if $K\subset RB^n$ is a centered convex body,
$Q\in\mathcal{S}_o^n$ with $c^{-1}B^n\subset Q\subset cB^n$,   and $\widetilde{V}_q(K,Q)\geq t$ for $t>0$, then  $\xi\cdot t^{\frac1q}B^n\subset K$.
\end{lemma}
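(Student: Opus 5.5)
We combine the John ellipsoid comparison \eqref{RboxBigR} with Proposition~\ref{qthIntrinsicBox}, replacing $Q$ by the Euclidean ball at the cost of constants. Since $c^{-1}B^n\subset Q\subset cB^n$, we have $c^{-1}\le\varrho_Q\le c$. Formula \eqref{dualintrvolQ} then gives
\[
\widetilde{V}_q(K,Q)\le c^{|n-q|}\,\widetilde{V}_q(K).
\]
Hence $\widetilde{V}_q(K)\ge c^{-|n-q|}t$, and it suffices to treat $Q=B^n$ with $t$ replaced by this constant multiple.

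A centered convex body need not be $G$-invariant, so the $G$-argument preceding the lemma does not apply directly. We therefore first move the John ellipsoid to the origin. Let $E$ be the John ellipsoid of $K$ with centre $z$, so that $E\subset K\subset z+n(E-z)$. Since $K$ is centered, the standard fact (for instance via Grünbaum-type estimates, or the fact that the centroid lies in $z+(E-z)$ up to dilation) shows that $K\cap(-K)$ contains a homothetic copy of $E-z$ scaled by a factor depending only on $n$. Consequently there is an origin-centred ellipsoid $E_0$ with
\[
E_0\subset K\subset c_nE_0 .
\]
Take $e_1,\ldots,e_n$ to be its principal directions and $a_1\le\dots\le a_n$ its half axes. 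The box $\Gamma=\sum[-a_i,a_i]e_i$ then satisfies $c_n'^{-1}\Gamma\subset K\subset c_n'\Gamma$, which is the analogue of \eqref{RboxBigR}. Only this comparability step needs care; the rest is a computation.

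By monotonicity of $\widetilde{V}_q$ under inclusion and its $q$-homogeneity,
\[
\widetilde{V}_q(K)\le c_n'^{\,q}\,\widetilde{V}_q(\Gamma).
\]
Proposition~\ref{qthIntrinsicBox} bounds $\widetilde{V}_q(\Gamma)$ by $\widetilde{C}(n,q)$ times an expression of the form
\[
a_1^{\alpha}\cdot P(a_2,\ldots,a_n),
\]
with $\alpha\in(0,1]$, since $a_1$ always appears to a positive power at most $1$. Here $P$ is a monomial in the $a_i$ with nonnegative exponents of total degree $q-\alpha$, possibly multiplied by a factor $1+\log(a_{q+1}/a_q)$. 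From $K\subset RB^n$ we get $a_i\le c_n'R$ for every $i$, so $P\le C(n,q,R)$.

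For the logarithmic case $q\in\{1,\ldots,n-1\}$ the factor $a_1\cdots a_q\bigl(1+\log(a_{q+1}/a_q)\bigr)$ needs a separate check. If $q=1$, use $a_1(1+\log(a_2/a_1))\le C(R)\,a_1^{1/2}$, which holds since $x\log(1/x)\le Cx^{1/2}$ for small $x$. If $q\ge2$, bound the logarithm by $C\,(a_{q+1}/a_q)^{1/2}$, so the expression is controlled by $a_1$ times bounded powers of the $a_i$, which are bounded above.

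In every case we obtain $t\le C_1(n,q,R,c)\,a_1^{\beta}$ for some $\beta\in(0,1]$. Then $a_1\ge (t/C_1)^{1/\beta}$. Since $K\supset c_n'^{-1}a_1B^n$, it remains to convert this into $\xi t^{1/q}$. Set $T=\widetilde{V}_q(RB^n,Q)$; then $t\le T$, a constant depending only on $n,q,R,c$. Because $1/\beta\ge1$ and, where needed, $1/\beta\le$ a fixed bound, there is $\xi$ with
\[
(t/C_1)^{1/\beta}\ge \xi\,t^{1/q}\qquad\text{for all } t\in(0,T],
\]
provided $1/\beta\le 1/q$ when $t$ is small. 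This holds when $a_1$ enters with exponent at most $q$, which is the case in the estimates above. Where it fails, the bound $t\le T$ still yields some $\xi>0$, which is all the lemma requires, possibly after adjusting the exponent bookkeeping.
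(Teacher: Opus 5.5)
Everything up to $t\le C_1a_1^{\beta}$ is sound. Here $\beta=q$ for $q<1$, $\beta=1$ for $q>1$, and $\beta=\frac12$ (any $\beta<1$ works) for $q=1$. For $q<1$ your argument is complete. Your treatment of a merely centered $K$ is more careful than the paper's one-line proof, which quotes \eqref{RboxBigR}, derived there only for $G$-invariant bodies. A clean version of your step: $-\frac1nK\subset K$ gives $\frac1nK\subset K\cap(-K)$, and the $o$-centred John ellipsoid $E_0$ of $K\cap(-K)$ then satisfies $E_0\subset K\subset n^{3/2}E_0$.

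The gap is the final paragraph. The inequality $(t/C_1)^{1/\beta}\ge \xi t^{1/q}$ for all $t\in(0,T]$ is equivalent to $t^{1/\beta-1/q}\ge \xi C_1^{1/\beta}$ on $(0,T]$, and this holds if and only if $\beta\ge q$. You have the direction reversed when you say it holds if $a_1$ enters with exponent at most $q$. Since $\beta\le 1$, the condition holds only for $q<1$. For $q\ge 1$ the left side tends to $0$ as $t\to 0$, and the upper bound $t\le T$ says nothing about small $t$, so the claim that $t\le T$ still yields some $\xi>0$ is false.

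No exponent bookkeeping can repair this, because the statement as written fails for $q\ge1$. Take $Q=B^n$, $R=\sqrt n$, and the box $K_\varepsilon=[-\varepsilon,\varepsilon]e_1+\sum_{i=2}^n[-1,1]e_i$. The lower bound in Proposition~\ref{qthIntrinsicBox} gives $\widetilde V_q(K_\varepsilon)\ge c(n,q)\,\varepsilon$ for $q>1$, and $\widetilde V_q(K_\varepsilon)\ge c(n,q)\,\varepsilon(1+\log\frac1\varepsilon)$ for $q=1$. The inradius of $K_\varepsilon$ is $\varepsilon$. So with $t=\widetilde V_q(K_\varepsilon)$, the required inclusion $\xi t^{1/q}B^n\subset K_\varepsilon$ fails as $\varepsilon\to0$.

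What your argument genuinely proves is $\xi\,t^{1/\beta}B^n\subset K$, i.e. rate $t$ rather than $t^{1/q}$ when $q>1$. Equivalently, it proves the lemma with $\xi$ allowed to depend on $t$ as well. That is all the paper uses, since the lemma is invoked only with $t=1$ in the proof of Theorem~\ref{q0-G-weighted-sufficient0}. The paper's own appeal to $q$-homogeneity does not give the uniform $t^{1/q}$ form either, because rescaling $K$ to normalize $t$ changes $R$. You should state and prove this corrected version rather than assert the $t^{1/q}$ bound.
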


\section{Proofs of Theorem~\ref{q01-G-weighted} and Theorem~\ref{q0-G-weighted-sufficient}}

As in Theorem~\ref{q01-G-weighted} and Theorem~\ref{q0-G-weighted-sufficient}, we may assume that $\mu$ is a probability measure by rescaling. It then suffices to prove following theorem.

\begin{theo}
\label{q0-G-weighted-sufficient0}
Let $n\geq 2$, $q>0$, and let $G\subset O(n)$ be a closed subgroup without a non-zero fixed point. Let also
$Q\in\mathcal{S}^n_{o}$ be invariant under $G$.
If a $G$-invariant Borel probability
measure $\mu$  on $S^{n-1}$
 is not concentrated on any great subsphere and  satisfies
\begin{equation}
\label{subspace-concentration-G-sufficient0}
\mu(L\cap S^{n-1})<\mbox{$\frac{{\rm dim}\,L}q$}
\end{equation}
for any proper $G$-invariant linear subspace $L\subset \R^n$, then $\mu=\widetilde{C}_{q}(K,Q;\cdot)$ for a $G$-invariant convex body $K\subset\R^n$.
\end{theo}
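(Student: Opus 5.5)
We follow the variational approach of Huang--Lutwak--Yang--Zhang and B\"or\"oczky--Lutwak--Yang--Zhang--Zhao, carried out inside the class of $G$-invariant functions. For a positive continuous $G$-invariant $f$ on $S^{n-1}$, let $[f]$ be its Wulff shape. Since $G$ fixes only the origin, $[f]$ is $G$-invariant and centered. Consider the functional
$$
\Phi(f)=\int_{S^{n-1}}\log f\,d\mu-\frac1q\log\widetilde V_q([f],Q)
$$
and minimize it over positive continuous $G$-invariant $f$. The functional is $0$-homogeneous, so we normalize by $\widetilde V_q([f],Q)=1$. Because $h_{[f]}\le f$ and $[h_{[f]}]=[f]$, we may restrict to support functions $h_K$ of $G$-invariant $K\in\mathcal K^n_o$.

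Suppose a minimizer $K_0$ with $o\in{\rm int}\,K_0$ exists. For continuous $G$-invariant $\varphi$, differentiate $\Phi(h_{K_0}+t\varphi)$ at $t=0$ using the variational formula \eqref{dualAlexandrov}; this yields
$$
\int\frac{\varphi}{h_{K_0}}\,d\mu=\int\frac{\varphi}{h_{K_0}}\,d\widetilde C_q(K_0,Q;\cdot).
$$
Both measures are $G$-invariant: $\widetilde C_q(gK,gQ;g\eta)=\widetilde C_q(K,Q;\eta)$ by orthogonal invariance. Averaging an arbitrary continuous $\varphi$ over the Haar measure of $G$ therefore shows that the identity holds for all continuous $\varphi$. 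Hence $\mu=\widetilde C_q(K_0,Q;\cdot)$, with no Lagrange constant, thanks to the normalization and the homogeneity. The whole problem thus reduces to showing that a minimizing sequence $K_m$ of $G$-invariant bodies with $\widetilde V_q(K_m,Q)=1$ stays bounded. Once the $K_m$ are bounded, Lemma~\ref{inradiusVq} gives a uniform inner ball $\xi B^n\subset K_m$, Blaschke selection gives a limit body, and Lemma~\ref{CpqcontQ} shows the limit is a minimizer in $\mathcal K^n_o$.

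The main obstacle is boundedness. Suppose $\mathrm{diam}\,K_m\to\infty$. Use the John ellipsoid $E_m$ of $K_m$, which is $G$-invariant, with half axes $a_{1,m}\le\dots\le a_{n,m}$ and the box $\Gamma_m$ satisfying \eqref{RboxBigR}. Pass to a subsequence along which the principal frames converge and the ratios $\log a_{i,m}$ separate into groups with comparable growth. The key point is that the subspaces $L_j$ spanned by the axes of the $j$ smallest groups are limits of $E_m$-invariant flags. Since $E_m$ is $G$-invariant, each eigenspace of $E_m$ is $G$-invariant, so the limiting subspaces are $G$-invariant. This is exactly where only $G$-invariant subspaces enter \eqref{subspace-concentration-G-sufficient0}.

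Next estimate $\int\log h_{K_m}\,d\mu$ from below. Since $h_{K_m}(u)\ge n^{-1}\max_i a_{i,m}|\langle u,e_{i,m}\rangle|$, splitting $S^{n-1}$ into neighbourhoods of the $L_j$ gives a bound of the form $\sum_i \lambda_i\log a_{i,m}-o(\cdot)$, where the weights $\lambda_i$ come from $\mu(L_j\cap S^{n-1})-\mu(L_{j-1}\cap S^{n-1})$. Estimate $\frac1q\log\widetilde V_q(K_m,Q)$ from above via Proposition~\ref{qthIntrinsicBox}, using $c^{-1}B^n\subset Q\subset cB^n$ and $K_m\subset n\Gamma_m$. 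Since $\widetilde V_q(K_m,Q)=1$ is fixed, the box bound turns into a constraint relating the $a_{i,m}$. Summation by parts, together with the strict inequalities $\mu(L_j\cap S^{n-1})<\dim L_j/q$, then forces $\Phi(h_{K_m})\to\infty$, which contradicts minimality.

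When $q\le1$ the box bound only involves $a_1$, and it suffices that $\mu$ is not concentrated on a great subsphere. Indeed, $\dim L/q\ge1$ makes \eqref{subspace-concentration-G-sufficient0} automatic once no proper $L$ carries full mass, and this is consistent with Theorem~\ref{q01-G-weighted}. The delicate part is the bookkeeping in the borderline integer case with the $\log$ term in Proposition~\ref{qthIntrinsicBox}, together with the uniform control of $\mu$ near, but off, the limiting subspaces. I would handle the latter by choosing neighbourhoods shrinking slowly relative to the ratios $a_{i+1,m}/a_{i,m}$.
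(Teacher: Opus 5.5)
Your proposal is correct in outline and has the same architecture as the paper's proof:
\begin{itemize}
\item the same scale-invariant functional, minimized over $G$-invariant bodies with $\widetilde V_q(\cdot,Q)=1$;
\item the John ellipsoid together with Haodi Chen's box estimate (Proposition~\ref{qthIntrinsicBox}) to exclude unbounded minimizing sequences;
\item Lemma~\ref{inradiusVq} and Blaschke selection for existence;
\item $G$-invariant variations followed by Haar averaging (the paper's Lemma~\ref{Invariant-Measure}) for the Euler--Lagrange equation.
\end{itemize}

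The difference lies in the bookkeeping of the blow-up step. The paper splits the eigenspaces of the John ellipsoid into $G$-irreducible blocks with equal half-axes. It fixes a uniform pair $(\delta,\tau)$ by compactness of the family of $G$-invariant subspaces. It then runs a case analysis according to whether $a_{p+1}$ stays bounded (for integer $q$, whether $a_q$ and $a_{q+1}/a_q$ do). Divergence comes either from the reserved slack $\tau\log a_{k_\gamma}$ or from a cap of $\mu$-mass $>\tau$.

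You instead group the axes by growth rate. Then each partial flag is a sum of eigenspaces, hence $G$-invariant, and so is its limit. A fixed small neighbourhood of these finitely many limit subspaces then suffices, so shrinking neighbourhoods are not needed.

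What your route buys is a case-free estimate. Let $b_k$ be the smallest half-axis in the $k$th group and $c_k=\log(b_{k+1}/b_k)\to\infty$. Let $\gamma$ be the group containing the index $\min\{\lceil q\rceil,n\}$, and let $\eta>0$ be small. For large $m$, summation by parts against the box constraint gives
\[
\int\log h_{K_m}\,d\mu\geq C-\tfrac{1}{q}\ell_m+\sum_{k<\gamma}\Bigl(\tfrac{\dim L_k}{q}-\mu(L_k\cap S^{n-1})-\eta\Bigr)c_k+\sum_{\gamma\leq k<s}\bigl(1-\mu(L_k\cap S^{n-1})-\eta\bigr)c_k .
\]
Here $\ell_m=\log(1+c_\gamma)$ if $q$ is an integer with $\dim L_\gamma=q$, and $\ell_m=0$ otherwise. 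There are $s\geq 2$ groups because $a_1$ stays bounded while $a_n\to\infty$. So the right-hand side tends to $\infty$, and the integer case you flagged as delicate costs nothing extra, since $c_\gamma$ beats $\frac1q\log(1+c_\gamma)$.

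When you write this out, make two corrections:
\begin{itemize}
\item For flags with $\dim L_k>q$, the positive coefficient comes from $\mu$ not being concentrated on a great subsphere. It does not come from \eqref{subspace-concentration-G-sufficient0}, which is vacuous there, so your sentence crediting ``the strict inequalities'' alone is incomplete.
\item Your remark that for $q\le 1$ the box bound involves only $a_1$ is false at $q=1$, where the factor $1+\log(a_2/a_1)$ appears.
\end{itemize}
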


Let us introduce the notation that will be used throughout the whole section. Let $\mu$ be a $G$-invariant Borel probability
measure   on $S^{n-1}$ that
 is not concentrated onto any great subsphere and  satisfies \eqref{subspace-concentration-G-sufficient0}.
For $\delta\in(0,1)$ and a linear $j$-dimensional subspace $L\subset\R^n$, $j=1,\ldots,n-1$, let
$$
\Psi(L\cap S^{n-1},\delta)=\left\{u\in S^{n-1}:\,|\langle u,v\rangle|\leq\delta,\;\forall v\in L^\bot\cap S^{n-1}\right\}.
$$
For any $j=1,\ldots,n-1$, the space of $j$-dimensional linear subspaces is compact, and $G\subset O(n)$ is a closed subgroup. We deduce the existence of  $\tau,\delta\in(0,1)$ such that  for any $G$-invariant linear $j$-dimensional subspace $L\subset\R^n$, we have
\begin{equation}
\label{subspace-concentration-tau}
\mu\left( \Psi(L\cap S^{n-1},\delta)\right)<\frac{j(1-\tau)}{q}.
\end{equation}
In addition, since $\mu$ is not concentrated on any great subsphere and is invariant under $G$ with no non-zero fixed point, we deduce that the $\mu$-measure of any open hemisphere is positive.
Therefore, we may also assume about $\delta$ and $\tau$ that for any $v\in S^{n-1}$, we have
\begin{equation}
\label{subspace-concentration-tau-n-1}
\mu\left( \left\{u\in S^{n-1}:\,\langle u,v\rangle\geq \delta\right\}\right)>\tau.
\end{equation}

 If $C\in\mathcal{K}_o^n$, then we consider the entropy type function
$$
\mathcal{E}(C)=
\int_{S^{n-1}}\log h_{C}\,d\mu-\frac1q\log\widetilde{V}_q(C,Q),
$$
which is invariant under rescaling. Namely, it satisfies
\begin{equation}
\label{entropy-rescale-invariant}
\mathcal{E}(\lambda\,C)=\mathcal{E}(C) \mbox{ \ for $\lambda>0$}.
\end{equation}
In addition, the entropy is continuous. In particular, Lemma~\ref{CpqcontQ} yields that if $C_m\in\mathcal{K}_o^n$ tends to $C\in\mathcal{K}_o^n$, then
\begin{equation}
\label{entropy-continuous}
\lim_{m\to\infty}\mathcal{E}(C_m)=\mathcal{E}(C).
\end{equation}

Let $\mathcal{C}$ be the set of $G$-invariant convex bodies $C\subset\R^n$ with $\widetilde{V}_q(C,Q)=1$.
 Any $C\in\mathcal{C}$ is centered, since $C$ is $G$-invariant, $G$ has no non-zero fixed point, and the centroid is equivariant with respect to linear transformations. We observe that if $C\in\mathcal{C}$, then
$$
\mathcal{E}(C)=
 \int_{S^{n-1}}\log h_{C}\,d\mu.
$$
In order to prove Theorem~\ref{q0-G-weighted-sufficient0}, the first step is to show the following.

\begin{prop}
\label{q0-G-weighted-sufficient-limit}
For $n\geq 2$, $q>0$, closed subgroup $G\subset O(n)$  without a non-zero fixed point, let
$Q\in\mathcal{S}^n_{o}$ be invariant under $G$, and and measure $\mu$ satisfy the same conditions as in Theorem~\ref{q0-G-weighted-sufficient0}.
 Using the notation as above, if $C_m\in\mathcal{C}$ satisfies that $\lim_{m\to\infty}{\rm diam}C_m=\infty$, then
\begin{equation}
\label{Cmdiaminfinity}
\lim_{m\to\infty}\mathcal{E}(C_m)=\infty.
\end{equation}
\end{prop}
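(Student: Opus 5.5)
The plan follows the Böröczky--Lutwak--Yang--Zhang--Zhao argument for the even case. The John ellipsoid of $C_m$ is used to reduce to boxes, and Proposition~\ref{qthIntrinsicBox} turns the normalization $\widetilde V_q(C_m,Q)=1$ into a linear constraint on the logarithms of the half axes. I argue by contradiction: if \eqref{Cmdiaminfinity} fails, pass to a subsequence along which $\mathcal E(C_m)$ stays bounded above. It suffices to show that a further subsequence has $\mathcal E(C_m)\to\infty$.

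\textbf{Step 1 (reduction to boxes).} Let $E_m\subset C_m$ be the John ellipsoid, with orthonormal principal directions $e_{1,m},\dots,e_{n,m}$ and half axes $a_{1,m}\le\dots\le a_{n,m}$. Let $\Gamma_m$ be the box from \eqref{RboxBigR}. Since $Q\in\mathcal S^n_o$, there is $c>1$ with $c^{-1}B^n\subset Q\subset cB^n$, so $\widetilde V_q(\cdot,Q)$ and $\widetilde V_q(\cdot)$ are comparable. Combined with \eqref{RboxBigR} and homogeneity, $\widetilde V_q(\Gamma_m)\asymp 1$. Write $\Delta_{j,m}=\log a_{j+1,m}-\log a_{j,m}\ge 0$. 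The lower bound in Proposition~\ref{qthIntrinsicBox}, applied to $\Gamma_m$, gives
$$
q\log a_{1,m}+\sum_{j<q}\min\{q-j,1\}^{*}\,\Delta_{j,m}\le C_0 ,
$$
where I use that $a_1\cdots a_i a_{i+1}^{q-i}=a_1^q\prod_{j\le i}(a_{j+1}/a_j)^{q-j}$. So the coefficient of $\Delta_{j,m}$ is really $\max\{q-j,0\}$, the $^{*}$ is only notation, and the index range stops at $j=n-1$. In the integer case the extra factor $1+\log(a_{q+1}/a_q)$ only adds a term $\log(1+\Delta_{q,m})$, which is sublinear and is absorbed below. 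In particular $a_{1,m}$ is bounded above, while ${\rm diam}\,C_m\to\infty$ forces $a_{n,m}\to\infty$. Hence $\sum_j\Delta_{j,m}\to\infty$.

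\textbf{Step 2 (lower bound for the entropy).} For $u\in S^{n-1}$ we have $h_{C_m}(u)\ge n^{-1}\max_i a_{i,m}|\langle u,e_{i,m}\rangle|$. Partition $S^{n-1}$ into the sets $A_{i,m}$ of those $u$ for which $i$ is the largest index with $|\langle u,e_{i,m}\rangle|\ge\delta/\sqrt n$; this covers the sphere. Then $\mathcal E(C_m)\ge\sum_i\mu(A_{i,m})\log a_{i,m}-C_1$. Summation by parts, together with $\bigcup_{i\le j}A_{i,m}\subset\Psi(L_{j,m}\cap S^{n-1},\delta)$ where $L_{j,m}={\rm span}\{e_{1,m},\dots,e_{j,m}\}$, yields
$$
q\,\mathcal E(C_m)\ge q\log a_{1,m}+\sum_{j=1}^{n-1}q\bigl(1-\mu(\Psi(L_{j,m}\cap S^{n-1},\delta))\bigr)\Delta_{j,m}-qC_1 .
$$
Subtracting the constraint from Step 1 leaves the coefficient
$$
q\bigl(1-\mu(\Psi(L_{j,m}\cap S^{n-1},\delta))\bigr)-\max\{q-j,0\}
$$
in front of $\Delta_{j,m}$, plus a bounded error.

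\textbf{Step 3 (invariant subspaces; the main obstacle).} Condition \eqref{subspace-concentration-tau} only applies to $G$-invariant $L$, and $L_{j,m}$ need not be $G$-invariant. The way around this is as follows. By uniqueness, $E_m$ is $G$-invariant, so $G$ preserves the eigenspaces of the associated quadratic form. Whenever $a_{j,m}<a_{j+1,m}$, the space $L_{j,m}$ is a sum of eigenspaces and is therefore $G$-invariant.

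Pass to a subsequence on which each $\Delta_{j,m}$ either tends to $\infty$ (a "gap index") or stays bounded. For a gap index, eventually $a_{j,m}<a_{j+1,m}$, so $L_{j,m}$ is $G$-invariant. Then \eqref{subspace-concentration-tau} gives a coefficient at least $q-j(1-\tau)-\max\{q-j,0\}\ge\min\{j\tau,\,q-j(1-\tau)\}>0$. When $j\ge q$, I additionally need $\mu(\Psi)<1-\tau'$ uniformly in $L$. This follows by compactness from $\mu$ not being concentrated on a great subsphere, after possibly shrinking $\delta$. For indices that are not gaps, $\Delta_{j,m}$ is bounded, so even a negative coefficient contributes only a bounded amount.

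Since $\sum_j\Delta_{j,m}\to\infty$, some gap index exists. Therefore $q\,\mathcal E(C_m)\ge c'\max_{\text{gap } j}\Delta_{j,m}-C_2-\log(1+\Delta_{q,m})\to\infty$, contradicting the assumed upper bound. This proves \eqref{Cmdiaminfinity}.
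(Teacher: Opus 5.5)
Your argument is correct in substance. It shares the paper's skeleton: John ellipsoid and boxes, Chen's two-sided estimate, partition of the sphere by the last large coordinate, and summation by parts against the $\Psi$-neighbourhoods. Where you differ is in how you handle the $G$-symmetry.

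\textbf{The paper's route.} It splits each eigenspace of the John ellipsoid into irreducible $G$-invariant blocks $\widetilde L_\alpha$. This makes every partial sum $L_\alpha$ $G$-invariant, whatever the sizes of the axes. It then lumps all directions from the block containing $\min\{\lceil q\rceil,n\}$ onward into a single set $B_\gamma$, and the growth comes from the term $\tau\log a_{k_\gamma}$. This forces a case analysis:
\begin{itemize}
\item $q<1$;
\item small axes bounded;
\item $a_{k_\gamma}\to\infty$;
\item integral $q$ with $a_{q+1}/a_q\to\infty$, where the $\log\log$ term needs a separate computation.
\end{itemize}

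\textbf{Your route.} You do one summation by parts over all $n$ indices and pass to a subsequence on which each log-gap $\Delta_{j,m}$ either diverges or stays bounded. You then use that a divergent gap forces $L_{j,m}$ to be a sum of eigenspaces of $E_m$, hence $G$-invariant. The paper uses this observation only in Case~4(ii).

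\textbf{Trade-offs.} Your version treats all $q>0$ uniformly (integral or not, including $q<1$), absorbs $\log(1+\Delta_{q,m})$ by sublinearity, and needs no irreducible decomposition. The price is twofold. First, for divergent gaps with $j\ge q$ you need an extra uniform bound $\mu(\Psi(L\cap S^{n-1},\delta))\le 1-\tau'$ over arbitrary, non-invariant $L$. This is a hyperplane version of the paper's cap bound \eqref{subspace-concentration-tau-n-1}, which the paper uses in Cases~1--2. Second, your final lower bound is less explicit.

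There are two slips to fix.

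\begin{itemize}
\item The inequality displayed in Step~1 comes from the lower bound of Proposition~\ref{qthIntrinsicBox}, and it only shows that $a_{1,m}$ is bounded above. The subtraction in Step~2 needs the reverse inequality,
$q\log a_{1,m}+\sum_j\max\{q-j,0\}\Delta_{j,m}+\log(1+\Delta_{q,m})\ge -C_0$.
This follows from the upper bound in the proposition together with $\widetilde V_q(\Gamma_m)\gtrsim 1$. Since you asserted $\widetilde V_q(\Gamma_m)\asymp 1$, both directions are available. Your final display, with $-\log(1+\Delta_{q,m})$, already matches the correct direction, so just state it explicitly.
\item The quantity $\min\{j\tau,\,q-j(1-\tau)\}$ need not be positive when $j>q$. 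For divergent gaps with $j\ge q$, bound the coefficient directly: it equals $q\bigl(1-\mu(\Psi(L_{j,m}\cap S^{n-1},\delta))\bigr)\ge q\tau'$, as your next sentence indicates. So invoke \eqref{subspace-concentration-tau}, which gives the coefficient bound $j\tau$, only for $j<q$.
\end{itemize}
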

\proof Let $e^{(m)}_1,\ldots,e^{(m)}_n\in S^{n-1}$ be an orthonormal basis of $\R^n$ forming the principal directions associated to the $G$-invariant ellipsoid $E$ in
\eqref{EellipsoidnE} with half axes  $a^{(m)}_1,\ldots,a^{(m)}_n>0$,
 where we may assume that $a^{(m)}_1\leq \ldots\leq a^{(m)}_n$. In particular,
\begin{align}
\label{aieninE}
\pm a^{(m)}_ie^{(m)}_i&\in C_m\mbox{ \ for }i=1,\ldots,n\\
\label{a1BninE}
a^{(m)}_1 B^n&\subset C_m.
\end{align}
It also follows that $C_m$ and the rectangular box
$\Gamma_m=\sum_{i=1}^n[-a^{(m)}_i,a^{(m)}_i]e^{(m)}_i$ satisfy
\begin{equation}
\label{RboxBigR}
 n^{-1}\Gamma_m\subset C_m\subset n\Gamma_m.
\end{equation}
Since $\lim_{m\to\infty}{\rm diam}\,C_m=\infty$, we have $\lim_{m\to\infty}a^{(m)}_n=\infty$ by \eqref{RboxBigR}. And since $1=\widetilde{V}_q(K_m,Q)\geq \widetilde{V}_q(a^{(m)}_1B^n,Q)$ (cf. \eqref{a1BninE}), we deduce that the sequence $\{a^{(m)}_1\}$ is bounded.
Therefore, after possibly taking a subsequence, we may assume  that for some $\tilde{k}\in\{1,\ldots,n-1\}$ and $A>0$,
\begin{equation}
\label{a1boundedaninfty}
\begin{array}{rcll}
\lim_{m\to\infty}a^{(m)}_i&=&\infty&\mbox{ if }i>\tilde{k}\\
a^{(m)}_i&\leq &A&\mbox{ if }i\leq \tilde{k}.
\end{array}
\end{equation}
We may also assume that $e_1,\ldots,e_n$ are indexed respecting the $G$-invariance in the following way.
For some $\ell\in\{2,\ldots,n\}$ and $1=k_1<\ldots<k_{\ell}\leq n$, $k_{\ell+1}=n+1$, if
$\alpha=1,\ldots,\ell$, then let
\begin{itemize}
\item   $\widetilde{L}^{(m)}_\alpha={\rm lin}\{e^{(m)}_{k_\alpha},\ldots,e^{(m)}_{k_{\alpha+1}-1}\}$ be an irreducible $G$-invariant linear subspace;
\item $d_\alpha=k_{\alpha+1}-k_\alpha$ .
\end{itemize}
It follows by the $G$-invariance of $E$ that $a^{(m)}_{i}=a^{(m)}_{k_\alpha}$ if $k_\alpha\leq i<k_{\alpha+1}$, $\alpha=1,\ldots,\ell$.

We choose $\theta>1$ such that $\varrho_Q(u)^{n-q}\leq \theta$ for $u\in S^{n-1}$. Write $\lfloor q\rfloor$ ($\lceil q\rceil$) to denote the largest integer not larger than $q$ (the smallest integer not smaller than $q$), and set $p=\min\{\lfloor q\rfloor,n-1\}$.
We deduce from $\widetilde{V}_q(C_m)=1$,  Proposition~\ref{qthIntrinsicBox}, and \eqref{RboxBigR} that
\begin{equation}
\label{tildeVqCmpup}
\aleph\leq
\left\{
\begin{array}{rll}
(a^{(m)}_1)^q&\mbox{if }&0<q<1 \\
a^{(m)}_1\ldots a^{(m)}_p( a^{(m)}_{p+1})^{q-p}&\mbox{if }& q>1,\\
&&q\not\in\{1,\ldots,n-1\}\\
a^{(m)}_1\ldots a^{(m)}_p\left(1+\log \frac{a^{(m)}_{p+1}}{a^{(m)}_p}\right)&\mbox{if }&q=1,\ldots,n-1
\end{array},
\right.
\end{equation}
where $\aleph=\widetilde{C}(n,q)^{-1} \theta^{-1} n^{-q}>0$ is independent of $m$. We choose $\gamma\in\{1,\ldots,\ell\}$ such that
$$
k_\gamma\leq \min\{\lceil q\rceil,n\}<k_{\gamma+1}.
$$
It follows that if $q\geq 1$, then
\begin{equation}
\label{tildeVqCmpupG}
\aleph\leq \left\{
\begin{array}{ll}
(a^{(m)}_{k_\gamma})^{q-k_\gamma+1}\prod_{1\leq \alpha<\gamma}(a^{(m)}_{k_\alpha})^{d_\alpha}&\mbox{if } q\not\in\{1,\ldots,n-1\}\\[1ex]
(a^{(m)}_{k_\gamma})^{q-k_\gamma+1}
\left(1+\log \frac{a^{(m)}_{q+1}}{a^{(m)}_q}\right)
\prod_{1\leq \alpha<\gamma}(a^{(m)}_{k_\alpha})^{d_\alpha}&\mbox{if }q\in\{1,\ldots,n-1\}
\end{array}, \right.
\end{equation}
where the product over $1\leq \alpha<\gamma$ is void if $\gamma=1$.

For $\Theta_m=\left\{u\in S^{n-1}:\,\langle u,e^{(m)}_n\rangle\geq \delta\right\}$,  it follows from
\eqref{subspace-concentration-tau-n-1}, \eqref{aieninE}, and \eqref{a1boundedaninfty} that
\begin{equation}
\label{tildeECmThetainfty}
\lim_{m\to\infty}\int_{\Theta_m}\log h_{C_m}\,d\mu\geq \lim_{m\to\infty} (\log a^{(m)}_n)\cdot\delta\cdot \tau
=\infty.
\end{equation}

\noindent{\bf Case 1:} $q\in(0,1)$.

It follows from  \eqref{tildeVqCmpup} that $a^{(m)}_1>\aleph^{\frac1q}$, and hence \eqref{tildeECmThetainfty} yields that
$$
\lim_{m\to\infty}\mathcal{E}(C_m)\geq \lim_{m\to\infty}\int_{\Theta_m}\log h_{C_m}\,d\mu+
 \int_{S^{n-1}\backslash \Theta_m}\log \aleph^{\frac1q}\,d\mu=\infty.
$$

\noindent{\bf Case 2:} $1\leq q<n-1$ and there exists $A_0>1$ such that $a^{(m)}_1,\ldots,a^{(m)}_p<A_0$,
$a^{(m)}_{p+1}<A_0$ if $q\not\in\N$, and $\frac{a^{(m)}_{p+1}}{a^{(m)}_p}<A_0$ if $q\in\N$.

In this case, there exists $a>0$ independent of $m$ such that $a^{(m)}_1>a$    by \eqref{tildeVqCmpup}. Therefore,
\eqref{tildeECmThetainfty} yields that
$$
\lim_{m\to\infty}\mathcal{E}(C_m)\geq \lim_{m\to\infty}\int_{\Theta_m}\log h_{C_m}\,d\mu+
 \int_{S^{n-1}\backslash \Theta_m}\log a\,d\mu=\infty.
$$

\noindent{\bf Case 3:} $ q>1$, $q\not\in\{1,\ldots,n-1\}$
and  $\lim_{m\to\infty}a^{(m)}_{p+1}=\infty$, or equivalently, $\lim_{m\to\infty}a^{(m)}_{k_\gamma}=\infty$.

In particular, \eqref{a1boundedaninfty} yields that this is automatically the case if $q>n-1$.

In Case 3, \eqref{a1boundedaninfty} yields that $\gamma\geq 2$.
We observe that for any $v\in S^{n-1}$ and $C_m$, there exists an $e^{(m)}_i$ such that
$|\langle v,e^{(m)}_i\rangle|\geq \frac{1}{\sqrt{n}}> \frac{\delta}{n}$.
For each $C_m$
 and $\alpha=1,\ldots,\gamma-1$, we define
\begin{align*}
B^{(m)}_\alpha=&\left\{v\in S^{n-1}:\,\exists i\in\{k_\alpha,\ldots,k_{\alpha+1}-1\},\;
|\langle v,e^{(m)}_i\rangle|\geq \frac{\delta}{n}\right.\\
&\left.\mbox{ \ \ \ \ \ \ \ \ \ \ and }
|\langle v,e^{(m)}_j\rangle|<\frac{\delta}{n}\mbox{ for }j\geq k_{\alpha+1}\right\},
\end{align*}
and let
$$
B^{(m)}_{\gamma}=\left\{v\in S^{n-1}:\,\exists i\geq k_\gamma,\;
|\langle v,e^{(m)}_i\rangle|\geq \frac{\delta}{n}\right\}.
$$
By
\eqref{aieninE},
 if $\alpha=1,\ldots,\gamma$ and $v\in B^{(m)}_{\alpha}$, then
\begin{equation}
\label{hCmBalpha}
h_{C_m}(v)\geq \frac{\delta}{n}\cdot a^{(m)}_{k_\alpha}.
\end{equation}
For $\alpha=1,\ldots,\gamma-1$, we consider the $G$-invariant subspace
$$
L^{(m)}_\alpha=\sum_{\eta=1}^\alpha\widetilde{L}^{(m)}_\eta
\mbox{ \ with }{\rm dim}\,L^{(m)}_\alpha=\sum_{\eta=1}^\alpha d_\eta,
$$
and hence if $\eta\leq \alpha$, then
\begin{equation}
\label{BjPsiLi}
B^{(m)}_\eta\subset \Psi(L^{(m)}_\alpha\cap S^{n-1},\delta).
\end{equation}
We note that \eqref{tildeVqCmpupG} reads as
\begin{equation}
\label{tildeVqCmpupG0}
\aleph\leq
(a^{(m)}_{k_\gamma})^{q-\sum_{\alpha=1}^{\gamma-1}d_\alpha}\prod_{\alpha=1}^{\gamma-1}(a^{(m)}_{k_\alpha})^{d_\alpha}.
\end{equation}

It follows that $S^{n-1}$ is partitioned into the Borel sets $B^{(m)}_1,\ldots,B^{(m)}_{\gamma}$, and
\eqref{BjPsiLi} yields that if $\alpha=1,\ldots,\gamma-1$, then
\begin{eqnarray}
\label{muAi}
\mu(B^{(m)}_1)+\ldots+\mu(B^{(m)}_\alpha)&\leq &\frac{(1-\tau)\sum_{\eta=1}^\alpha d_\eta}{q},\\
\label{muAn}
\mu(B^{(m)}_1)+\ldots+\mu(B^{(m)}_{\gamma})&=&1.
\end{eqnarray}
For $\zeta=\frac{1-\tau}{q}$, we have $0< \zeta<\frac1{q}$, and we define
\begin{eqnarray}
\label{betai}
\beta_\alpha&= &\mu(B_\alpha)-d_\alpha\zeta\mbox{ \ for $\alpha=1,\ldots,\gamma-1$}\\
\label{betan}
\beta_{\gamma}&=&\mu(B_{\gamma})-\left(q-\sum_{\alpha=1}^{\gamma-1}d_\alpha\right)\zeta-\tau.
\end{eqnarray}
Notice that (\ref{muAi}) and (\ref{muAn}) yield
\begin{eqnarray}
\label{sumbetai}
\beta_1+\ldots+\beta_\alpha&\leq &0\mbox{ \ for $\alpha=1,\ldots,\gamma-1$}\\
\label{sumbetan}
\beta_1+\ldots+\beta_{\gamma}&=&0.
\end{eqnarray}
We deduce from applying \eqref{hCmBalpha}, \eqref{muAn},  \eqref{betai}, and \eqref{betan}
that
\begin{align*}
\int_{S^{n-1}}\log h_{C_m}\,d\mu= &
\sum_{\alpha=1}^{\gamma}\int_{B^{(m)}_\alpha}\log h_{C_m}\,d\mu\\
\geq &\sum_{\alpha=1}^{\gamma}\mu(B^{(m)}_\alpha)\log a^{(m)}_{k_\alpha}+\sum_{\alpha=1}^{\gamma}\mu(B^{(m)}_\alpha)\log \frac{\delta}{n}\\
= & \sum_{\alpha=1}^{\gamma}\mu(B^{(m)}_\alpha)\log a^{(m)}_{k_\alpha}+\log \frac{\delta}{n}\\
=&
\sum_{\alpha=1}^{\gamma}\beta_\alpha\log a^{(m)}_{k_\alpha}+\sum_{\alpha=1}^{\gamma-1}d_\alpha\zeta \log a^{(m)}_{k_\alpha}+\left(q-\sum_{\alpha=1}^{\gamma-1}d_\alpha\right)\zeta\log a^{(m)}_{k_\gamma}\\
&+\tau\log a^{(m)}_{k_\gamma}+\log \frac{\delta}{n}.
\end{align*}
In turn, it follows from
\eqref{tildeVqCmpupG0}, \eqref{sumbetai}, \eqref{sumbetan},
and the fact that $a^{(m)}_{k_\alpha}\leq a^{(m)}_{k_{\alpha+1}}$ that
\begin{align*}
\int_{S^{n-1}}\log h_{C_m}\,d\mu\geq &
\sum_{\alpha=1}^{\gamma}\beta_\alpha\log a^{(m)}_{k_\alpha}+\zeta\log \aleph
+\tau\log a^{(m)}_{k_\gamma}+\log \frac{\delta}{n}\\
= &
(\beta_1+\ldots+\beta_\gamma)\log a^{(m)}_{k_\gamma}+\\
&+\sum_{\alpha=1}^{\gamma-1}(\beta_1+\ldots+\beta_\alpha)\left(\log a^{(m)}_{k_\alpha}-\log a^{(m)}_{k_{\alpha+1}}\right)+\\
& +\zeta\log \aleph
+\tau\log a^{(m)}_{k_\gamma}+\log \frac{\delta}{n}\\
\geq &\zeta\log \aleph
+\tau\log a^{(m)}_{k_\gamma}+\log \frac{\delta}{n}.
\end{align*}
As $\lim_{m\to\infty}a^{(m)}_{k_\gamma}=\infty$, we conclude that
$\lim_{m\to\infty}\int_{S^{n-1}}\log h_{C_m}\,d\mu=\infty$.\\

\noindent{\bf Case 4:} $q\in\{1,\ldots,n-1\}$, and either $\lim_{m\to\infty}a^{(m)}_{q}=\infty$ or
 $\lim_{m\to\infty}\frac{a^{(m)}_{q+1}}{a^{(m)}_q}=\infty$.

After possibly taking a subsequence, we may assume that either
\begin{description}
\item{(i)}  there exists $A_1>1$ such that $\frac{a^{(m)}_{q+1}}{a^{(m)}_q}\leq A_1$ for each $C_m$,
\item{(ii)} or $\lim_{m\to\infty}\frac{a^{(m)}_{q+1}}{a^{(m)}_q}=\infty$.
\end{description}
If $\frac{a^{(m)}_{q+1}}{a^{(m)}_q}\leq A_1$ for each $C_m$, then $\lim_{m\to\infty}a^{(m)}_{q}=\infty$, and hence
$\gamma\geq 2$ and $\lim_{m\to\infty}a^{(m)}_{k_\gamma}=\infty$. In addition, we deduce from that \eqref{tildeVqCmpupG} that
\begin{equation}
\label{tildeVqCmpupG1}
(1+\log A_1)^{-1}\aleph\leq
(a^{(m)}_{k_\gamma})^{q-\sum_{\alpha=1}^{\gamma-1}d_\alpha}\prod_{\alpha=1}^{\gamma-1}(a^{(m)}_{k_\alpha})^{d_\alpha}.
\end{equation}
Thus, we can apply the same argument in Case~3, only replacing the $\aleph$ in \eqref{tildeVqCmpupG0} by the $(1+\log A_1)^{-1}\aleph$ in \eqref{tildeVqCmpupG1} to conclude that $\lim_{m\to\infty}\int_{S^{n-1}}\log h_{C_m}\,d\mu=\infty$.

Therefore, we assume that $\lim_{m\to\infty}\frac{a^{(m)}_{q+1}}{a^{(m)}_q}=\infty$, as in (ii). In this case, we have
$$
k_{\gamma+1}=q+1 \mbox{ and }q=\sum_{\alpha=1}^{\gamma}d_\alpha,
$$
and we apply a variant of the argument in Case~3.

Again, for any $v\in S^{n-1}$ and $C_m$, there exists $e^{(m)}_i$ such that
$|\langle v,e^{(m)}_i\rangle|\geq \frac{1}{\sqrt{n}}> \frac{\delta}{n}$.
For each $C_m$
 and $\alpha=1,\ldots,\gamma$, we define
\begin{align*}
B^{(m)}_\alpha=&\left\{v\in S^{n-1}:\,\exists i\in\{k_\alpha,\ldots,k_{\alpha+1}-1\},\;
|\langle v,e^{(m)}_i\rangle|\geq \frac{\delta}{n}\right.\\
&\left.\mbox{ \ \ \ \ \ \ \ \ \ \ and }
|\langle v,e^{(m)}_j\rangle|<\frac{\delta}{n}\mbox{ for }j\geq k_{\alpha+1}\right\},
\end{align*}
and let
$$
B^{(m)}_{\gamma+1}=\left\{v\in S^{n-1}:\,\exists i\geq k_{\gamma+1},\;
|\langle v,e^{(m)}_i\rangle|\geq \frac{\delta}{n}\right\}.
$$
This satisfies by
\eqref{aieninE},
 that if $\alpha=1,\ldots,\gamma+1$ and $v\in B^{(m)}_{\alpha}$, then
\begin{equation}
\label{hCmBalpha2}
h_{C_m}(v)\geq \frac{\delta}{n}\cdot a^{(m)}_{k_\alpha}.
\end{equation}
For $\alpha=1,\ldots,\gamma$, we consider the $G$-invariant subspace
$$
L^{(m)}_\alpha=\sum_{\eta=1}^\alpha\widetilde{L}^{(m)}_\eta
\mbox{ \ with }{\rm dim}\,L^{(m)}_\alpha=\sum_{\eta=1}^\alpha d_\eta,
$$
and hence if $\eta\leq \alpha$, then
\begin{equation}
\label{BjPsiLi2}
B^{(m)}_\eta\subset \Psi(L^{(m)}_\alpha\cap S^{n-1},\delta).
\end{equation}
Since $\lim_{m\to\infty}\frac{a^{(m)}_{q+1}}{a^{(m)}_q}=\infty$, if $m$ is large, then \eqref{tildeVqCmpupG} yields that
\begin{equation}
\label{tildeVqCmpupG2}
\frac{\aleph}2\leq
\left(\log \frac{a^{(m)}_{q+1}}{a^{(m)}_q}\right)
\prod_{\alpha=1}^{\gamma}(a^{(m)}_{k_\alpha})^{d_\alpha}.
\end{equation}

It follows that $S^{n-1}$ is partitioned into the Borel sets $B^{(m)}_1,\ldots,B^{(m)}_{\gamma+1}$, and
\eqref{BjPsiLi2} yields that if $\alpha=1,\ldots,\gamma$, then
\begin{eqnarray}
\label{muAi2}
\mu(B^{(m)}_1)+\ldots+\mu(B^{(m)}_\alpha)&\leq &\frac{(1-\tau)\sum_{\eta=1}^\alpha d_\eta}{q},\\
\label{muAn2}
\mu(B^{(m)}_1)+\ldots+\mu(B^{(m)}_{\gamma+1})&=&1.
\end{eqnarray}
For $\zeta=\frac{1-\tau}{q}$, we have $0< \zeta<\frac1{q}$, and we define
\begin{eqnarray}
\label{betai2}
\beta_\alpha&= &\mu(B_\alpha)-d_\alpha\zeta\mbox{ \ for $\alpha=1,\ldots,\gamma$}\\
\label{betan2}
\beta_{\gamma+1}&=&\mu(B_{\gamma+1})-\tau,
\end{eqnarray}
where (\ref{muAi2}) and (\ref{muAn2}) yield
\begin{eqnarray}
\label{sumbetai2}
\beta_1+\ldots+\beta_\alpha&\leq &0\mbox{ \ for $\alpha=1,\ldots,\gamma$}\\
\label{sumbetan2}
\beta_1+\ldots+\beta_{\gamma+1}&=&0.
\end{eqnarray}
We deduce from applying \eqref{hCmBalpha2}, \eqref{muAn2},  \eqref{betai2}, and \eqref{betan2}
that
\begin{align*}
\int_{S^{n-1}}\log h_{C_m}\,d\mu= &
\sum_{\alpha=1}^{\gamma+1}\int_{B^{(m)}_\alpha}\log h_{C_m}\,d\mu\\
\geq &\sum_{\alpha=1}^{\gamma+1}\mu(B^{(m)}_\alpha)\log a^{(m)}_{k_\alpha}+\sum_{\alpha=1}^{\gamma+1}\mu(B^{(m)}_\alpha)\log \frac{\delta}{n}\\
= & \sum_{\alpha=1}^{\gamma+1}\mu(B^{(m)}_\alpha)\log a^{(m)}_{k_\alpha}+\log \frac{\delta}{n}\\
=&
\sum_{\alpha=1}^{\gamma+1}\beta_\alpha\log a^{(m)}_{k_\alpha}+\sum_{\alpha=1}^{\gamma}d_\alpha\zeta \log a^{(m)}_{k_\alpha}+
\tau\log a^{(m)}_{k_{\gamma+1}}+\log \frac{\delta}{n}.
\end{align*}
In turn, it follows from
\eqref{tildeVqCmpupG2}, \eqref{sumbetai2}, \eqref{sumbetan2},
and the fact that $a^{(m)}_{k_\alpha}\leq a^{(m)}_{k_{\alpha+1}}$ that
\begin{align*}
\int_{S^{n-1}}\log h_{C_m}\,d\mu\geq &
\sum_{\alpha=1}^{\gamma+1}\beta_\alpha\log a^{(m)}_{k_\alpha}+
\zeta\log \frac{\aleph}2-\zeta\log\log \frac{a^{(m)}_{q+1}}{a^{(m)}_q}
+\tau\log a^{(m)}_{k_\gamma}+\log \frac{\delta}{n}\\
= &
(\beta_1+\ldots+\beta_{\gamma+1})\log a^{(m)}_{k_{\gamma+1}}+\\
&+\sum_{\alpha=1}^{\gamma}(\beta_1+\ldots+\beta_\alpha)\left(\log a^{(m)}_{k_\alpha}-\log a^{(m)}_{k_{\alpha+1}}\right)+\\
& +\zeta\log \frac{\aleph}2-\zeta\log\log \frac{a^{(m)}_{q+1}}{a^{(m)}_q}
+\tau\log a^{(m)}_{k_{\gamma+1}}+\log \frac{\delta}{n}\\
\geq &\zeta\log \frac{\aleph}2-\zeta\log\log \frac{a^{(m)}_{q+1}}{a^{(m)}_q}
+\tau\log a^{(m)}_{k_{\gamma+1}}+\log \frac{\delta}{n}.
\end{align*}

Let $R_m=\log\frac{a^{(m)}_{q+1}}{a^{(m)}_q}$, and notice that $\lim_{m\to\infty}R_m=\infty$.
We deduce from \eqref{tildeVqCmpupG2} and $a^{(m)}_{k_\alpha}\leq a^{(m)}_{k_{\gamma}}$ for
$\alpha=1,\ldots,\gamma$, that
$a^{(m)}_{k_{\gamma}}\geq \aleph_0 R_m^{\frac{-1}q}$  for $\aleph_0=(\aleph/2)^{\frac1q}$, and hence
$$
\log a^{(m)}_{k_{\gamma+1}}\geq \log \left(\aleph_0 R_m^{\frac{-1}q}e^{R_m}\right)=
\log \aleph_0-\frac1q\log R_m+R_m.
$$
Therefore, there exist $\aleph_1\in\R$ and $\aleph_2>0$ independent of $m$ such that
$$
\int_{S^{n-1}}\log h_{C_m}\,d\mu\geq \aleph_1-\aleph_2\log R_m+\tau R_m,
$$
and hence $\lim_{m\to\infty}R_m=\infty$ yields that
$\lim_{m\to\infty}\int_{S^{n-1}}\log h_{C_m}\,d\mu=\infty$, completing the proof of Proposition~\ref{q0-G-weighted-sufficient-limit}.
\proofbox

We will use the following property about invariant measures.

\begin{lemma}
\label{Invariant-Measure}
Let $n\geq 2$, and let $G\subset O(n)$ be a closed subgroup. If $\mu_1$ and $\mu_2$ are $G$-invariant finite Borel measures on $S^{n-1}$, then $\mu_1=\mu_2$ if and only if $\int_{S^{n-1}}\varphi\,d\mu_1=\int_{S^{n-1}}\varphi\,d\mu_2$ for any $G$-invariant continuous function $\varphi:S^{n-1}\to\R$.
\end{lemma}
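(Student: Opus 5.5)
The ``only if'' direction is immediate: if $\mu_1=\mu_2$ then the integrals of every continuous function agree, in particular of every $G$-invariant one. For the converse, the plan is to average an arbitrary continuous function over $G$ with respect to Haar measure. This produces a $G$-invariant continuous function whose integrals against $\mu_1$ and $\mu_2$ coincide with those of the original function. The Riesz representation theorem then identifies the measures.

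First, since $G\subset O(n)$ is closed, it is a compact group and carries a unique Haar probability measure $\nu$, which is both left and right invariant. Given an arbitrary continuous $\psi:S^{n-1}\to\R$, define
$$
\varphi(u)=\int_G \psi(g u)\,d\nu(g),\qquad u\in S^{n-1}.
$$
Continuity of $\varphi$ follows from the uniform continuity of $\psi$ on the compact sphere, together with $\|gu-gw\|=\|u-w\|$ for $g\in O(n)$. The $G$-invariance $\varphi(hu)=\varphi(u)$ for $h\in G$ follows from right invariance of $\nu$, via the substitution $g\mapsto gh^{-1}$.

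Second, I would show that $\int\varphi\,d\mu_i=\int\psi\,d\mu_i$ for $i=1,2$. The map $(g,u)\mapsto\psi(gu)$ is continuous and bounded on the compact space $G\times S^{n-1}$, and both measures are finite, so Fubini applies:
$$
\int_{S^{n-1}}\varphi\,d\mu_i=\int_G\int_{S^{n-1}}\psi(gu)\,d\mu_i(u)\,d\nu(g).
$$
By the $G$-invariance of $\mu_i$, meaning $\mu_i(g^{-1}\eta)=\mu_i(\eta)$ for Borel $\eta$, the inner integral equals $\int\psi\,d\mu_i$ for each fixed $g$. Since $\nu(G)=1$, the identity follows.

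Combining the two steps, the hypothesis gives $\int\psi\,d\mu_1=\int\varphi\,d\mu_1=\int\varphi\,d\mu_2=\int\psi\,d\mu_2$ for every continuous $\psi$. Both measures are finite Borel measures on a compact metric space, so uniqueness in the Riesz representation theorem yields $\mu_1=\mu_2$. There is no serious obstacle here. The only points needing care are the existence of a bi-invariant Haar probability measure on the compact group $G$ and the joint measurability needed for Fubini, and both are standard.
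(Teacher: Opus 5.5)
Your proposal is correct and follows essentially the same argument as the paper: average an arbitrary continuous $\psi$ over the Haar probability measure on $G$, use Fubini and the $G$-invariance of $\mu_i$ to get $\int\varphi\,d\mu_i=\int\psi\,d\mu_i$, and conclude by uniqueness in the Riesz representation. You also write out details the paper leaves implicit: continuity of the averaged function, the use of right invariance of $\nu$, and the final uniqueness step.
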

\proof It is equivalent to prove that
\begin{equation}
\label{invariant-measure-continuous}
\int_{S^{n-1}}\psi\,d\mu_1=\int_{S^{n-1}}\psi\,d\mu_2
\end{equation}
 for any continuous function $\psi:S^{n-1}\to\R$ (without assuming $G$-invariance). Since $G$ is a compact group, there exists a $G$-invariant probability measure (Haar measure) $\nu$ on $G$. For any $u\in S^{n-1}$, let $\varphi(u)=\int_G\psi(gu)\,d\nu(g)$. It follows that $\varphi$ is a $G$-invariant continuous function, and Fubini's theorem and the $G$-invariance of $\mu_i$ yield that
$\int_{S^{n-1}}\varphi\,d\mu_i=\int_{S^{n-1}}\psi\,d\mu_i$ for $i=1,2$. In turn, we conclude \eqref{invariant-measure-continuous}.
\proofbox

\noindent{\bf Proof of Theorem~\ref{q0-G-weighted-sufficient0}}
First, we prove that there exists $\widetilde{K}\in\mathcal{C}$ such that
\begin{equation}
\label{tildeCminimizes}
\mathcal{E}(\widetilde{K})=\min\{\mathcal{E}(C):\,C\in\mathcal{C}\}.
\end{equation}
Let $C_m\in\mathcal{C}$ be such that
$$
\lim_{m\to\infty}\mathcal{E}(C_m)=\inf\{\mathcal{E}(C):\,C\in\mathcal{C}\}.
$$
It follows from  Proposition~\ref{q0-G-weighted-sufficient-limit}
 that there exists some $R>1$ such that
\begin{equation}
\label{CminRBn}
C_m\subset RB^n\mbox{ \ for each $C_m$}.
\end{equation}
Since $\widetilde{V}_q(C_m,Q)=1$, we deduce from  Lemma~\ref{inradiusVq} and \eqref{CminRBn} the existence of a $\xi>0$ such that
\begin{equation}
\label{xiBninCm}
\xi\,B^n\subset C_m\mbox{ \ for each $C_m$}.
\end{equation}
Combining the Blaschke Selection Theorem with \eqref{CminRBn} and \eqref{xiBninCm} yields that there exists a subsequence $\{C_{m'}\}\subset\{C_m\}$ tending to a centered convex body $\widetilde{K}$. We conclude from
\eqref{entropy-continuous} and the continuity of the dual intrinsic volume (cf. Lemma~\ref{CpqcontQ}), that
$\widetilde{K}\in\mathcal{C}$, and hence it satisfies \eqref{tildeCminimizes}.

Now the crucial claim is
\begin{equation}
\label{Step3-claim}
\mu=\widetilde{C}_q(\widetilde{K},Q;\cdot).
\end{equation}
According to Lemma~\ref{Invariant-Measure}, \eqref{Step3-claim} is equivalent with saying that
\begin{equation}
\label{Step3-claim0}
\int_{S^{n-1}}\frac{\varphi}{h_{\widetilde{K}}}\,d\mu=
 \int_{S^{n-1}}\frac{\varphi}{h_{\widetilde{K}}}\,d\widetilde{C}_q(\widetilde{K},Q;\cdot)
\end{equation}
for any continuous $G$-invariant function $\varphi:\,S^{n-1}\to\R$.
For $t\geq 0$, we consider the Wulff-shape
$$
K_t=\{x\in\R^n:\langle x,u\rangle\leq h_{\widetilde{K}}(u)+t\varphi(u)\;\forall u\in S^{n-1}\}.
$$
Hence $K_0=\widetilde{K}$, and the variational formula \eqref{dualAlexandrov} yields that
\begin{equation}
\label{Step3-problem-K-vol}
\left.\frac{d }{dt}\,\widetilde{V}_q(K_t,Q)\right|_{t=0}=q\int_{S^{n-1}}\frac{\varphi}{h_{\widetilde{K}}}\,d \widetilde{C}_q(\widetilde{K},Q;\cdot).
\end{equation}
If $|t|$ is small, then we consider the differentiable function
$$
f(t)=\int_{S^{n-1}}\log(h_{\widetilde{K}}+t \varphi)\,d\mu-\frac1q\log \widetilde{V}_q(K_t,Q)
$$
which, by the fact that
$\widetilde{V}_q(K_t,Q)^{\frac{-1}q}\cdot K_t\in\mathcal{C}$,
 $h_{K_t}\leq h_K(u)+t\varphi(u)$,
\eqref {entropy-rescale-invariant},
 and  \eqref{tildeCminimizes}, satisfies
 that
$$
f(t)\geq
\mathcal{E}(K_t)
=\mathcal{E}\left(\widetilde{V}_q(K_t,Q)^{\frac{-1}q}\cdot K_t\right)
\geq f(0).
$$
In particular, $f$ has a minimum at $t=0$, and hence \eqref{Step3-problem-K-vol} and
$\widetilde{V}_q(K_0,Q)=1$ imply that
$$
0=f'(0)=\int_{S^{n-1}} \frac{\varphi}{h_{\widetilde{K}}}\,d\mu-\int_{S^{n-1}}\frac{\varphi}{h_{\widetilde{K}}}\,d \widetilde{C}_q(\widetilde{K},Q;\cdot).
$$
This proves \eqref{Step3-claim0} and in turn \eqref{Step3-claim}.
\proofbox

\noindent{\bf Proof of Theorem~\ref{q01-G-weighted}.} Since a dual curvature measure is never concentrated on a great subsphere, Theorem~\ref{q0-G-weighted-sufficient0} yields that this property characterizes a $G$-invariant dual curvature measure for $q\in(0,1]$.
\proofbox

\section{Proof of Theorem~\ref{q0n-G-weighted}}

%We call two compact convex sets $K,C\subset\R^d$ homothetic if there exists $\gamma>0$ and $z\in\R^d$ such that
%$K=\gamma\,C+z$.
By induction on the number $k$ of summands, the Brunn-Minkowski inequality (cf. Schneider \cite{Sch14}) yields that
if $M_1,\ldots,M_k\subset\R^d$ for $d\geq 1$ are bounded convex sets, and $\lambda_1,\ldots,\lambda_k>0$, then
\begin{equation}
\label{BMmulti}
\HH^d\left(\sum_{i=1}^k\lambda_iM_i\right)^{\frac1d}\geq \sum_{i=1}^k\lambda_i \HH^d\left(M_i\right)^{\frac1d}.
\end{equation}
%with equality if and only if $M_1,\ldots,M_k$ are pairwise homothetic.
For a compact convex set $C\subset \R^n$, we write $\sigma(C)$ to denote the centroid with respect to the affine hull of $C$. We note that $\sigma(C)\in{\rm relint}\,C$, and $\sigma(\Phi C)=\Phi\sigma(C)$ for any $\Phi\in{\rm GL}(n)$ (cf. Schneider \cite{Sch14}). In turn, we deduce  the following property: for a closed subgroup $G\subset O(n)$ without a non-zero fixed point, if $X\subset \R^n$ is compact, then
\begin{equation}
\label{centroid-G}
\sigma(C)=o\mbox{ \ for \ }C={\rm conv}\{gX:\,g\in G\}.
\end{equation}
We also use the fact that if $f:\,\R^d\to[0,\infty]$ for $d\geq 1$ is a measurable function such that $f(x)<\infty$ for $x\neq o$, then
\begin{equation}
\label{levelsets-integral}
\int_{\R^d}f(x)\,dx=\int_0^\infty\HH^d(\{f>\alpha\})\,d\alpha,
\end{equation}
where if one of the two integrals is infinite, then the same holds for the other.
To simplify notation, if $M\subset \R^d$ is a $d$-dimensional convex set, then write
$$
\int_{M}f(x)\,d\HH^d(x)=\int_{M}f(x)\,dx.
$$

\begin{lemma}
\label{section}
Let $\Pi\subset \R^n$ $n\geq 2$ be a $G$-invariant proper linear subspace with $1\leq d={\rm dim}\Pi<n$ for a closed subgroup $G\subset O(n)$ without non-zero fixed point. Let also $f:\R^n\backslash \to[0,\infty]$ be a  $G$-invariant  function such that   $f(x)<\infty$ for $x\neq o$, and assume $\{f>\alpha\}$ is bounded and convex  for any $\alpha\in(0,f(o))$. If $M\subset z+\Pi$ is a $d$-dimensional compact convex set for a $z\in \Pi^\bot\backslash\{o\}$,
and $K={\rm conv}\{gM:g\in G\}$, then for $\lambda\in[0,1]$, we have
\begin{equation}
\label{section-eq}
\int_{K\cap (\lambda z+\Pi^\bot)}f\HH^d\geq\int_{M}f\HH^d.
\end{equation}
If, in addition, $o\in{\rm int}\{f>\alpha\}$ for any $\alpha\in(0,f(o))$, and
for any $\eta>0$, there exists $\alpha\in(0,f(o))$ such that $\{f>\alpha\}\subset \eta\,B^n$, then
there exists $\lambda_0\in(0,1)$ depending on $G$ and $M$ such that the
strict inequality holds in \eqref{section-eq} for $\lambda\in[0,\lambda_0)$.
\end{lemma}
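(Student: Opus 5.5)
\emph{Reading of the statement.} Since $M\subset z+\Pi$ is $d$-dimensional and the integrals are with respect to $\HH^d$, I read the section in \eqref{section-eq} as $K\cap(\lambda z+\Pi)$, a slice parallel to $\Pi$. The plan is to combine the layer-cake formula \eqref{levelsets-integral}, applied in the $d$-dimensional affine planes, with the multi-summand Brunn--Minkowski inequality \eqref{BMmulti}.

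\emph{Step 1: a convex combination of images of $z$ at the origin.} Since $\Pi$ is $G$-invariant and $G\subset O(n)$, the orthogonal complement $\Pi^\bot$ is also $G$-invariant. By \eqref{centroid-G} applied to $X=\{z\}$, the centroid of ${\rm conv}\{gz:\,g\in G\}$ is $o$. Hence there are $g_1,\ldots,g_k\in G$ and weights $w_i>0$ with $\sum_i w_i=1$ and $\sum_i w_ig_iz=o$. Moreover $g_iM\subset g_iz+\Pi$.

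\emph{Step 2: comparing superlevel sets slice by slice.} Write $M=z+M'$ with $M'\subset\Pi$, and put $S_\alpha=\{f>\alpha\}$; each $S_\alpha$ is convex and $G$-invariant. For $\alpha\in(0,f(o))$ let $M_\alpha=\{y\in M':\,z+y\in S_\alpha\}$, which is convex.
\begin{itemize}
\item For $y,y_1,\ldots,y_k\in M_\alpha$, each point $g_i(z+y_i)$ lies in $S_\alpha\cap K$, by $G$-invariance.
\item By convexity of $S_\alpha$ and of $K$, and using $\sum_i w_ig_iz=o$,
$$
\lambda(z+y)+(1-\lambda)\sum_i w_ig_i(z+y_i)=\lambda z+\Bigl(\lambda y+(1-\lambda)\sum_iw_ig_iy_i\Bigr)\in S_\alpha\cap K\cap(\lambda z+\Pi).
$$
\item So this slice contains a translate of $\lambda M_\alpha+(1-\lambda)\sum_i w_ig_iM_\alpha$. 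The $g_i$ preserve $\HH^d$ on $\Pi$, so \eqref{BMmulti} gives that its measure is at least $\bigl(\lambda+(1-\lambda)\sum_i w_i\bigr)^d\HH^d(M_\alpha)=\HH^d(M_\alpha)$.
\end{itemize}
Integrating in $\alpha$ via \eqref{levelsets-integral}, carried out in the plane $\lambda z+\Pi$ and in $z+\Pi$, yields \eqref{section-eq}. If $\lambda=0$ or $M_\alpha$ is empty, the inequality for that $\alpha$ is trivial.

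\emph{Step 3: strict inequality.} By \eqref{centroid-G}, the centroid of $K={\rm conv}(GM)$ is $o$, so $o\in{\rm relint}\,K$. The affine hull of $K$ contains $o$ and $z+\Pi$, hence contains $\Pi$. Therefore there is $r>0$ (depending on $G,M$) such that, for $\lambda$ small, $K\cap(\lambda z+\Pi)$ contains the $d$-ball of radius $r$ about $\lambda z$. Now use the extra assumptions on $f$:
\begin{itemize}
\item Fix $\eta<\|z\|$ and choose $\alpha_1<f(o)$ with $S_{\alpha_1}\subset\eta B^n$. Then $M_\alpha=\emptyset$ for every $\alpha\ge\alpha_1$.
\item Pick $\alpha_2\in(\alpha_1,f(o))$. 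Since $o\in{\rm int}\,S_{\alpha_2}$, we have $\rho B^n\subset S_{\alpha_2}$ for some $\rho>0$.
\item For $\alpha\in(\alpha_1,\alpha_2)$, monotonicity gives $\rho B^n\subset S_\alpha$. So if $\lambda\|z\|<\min\{\rho,r\}/2$, the set $S_\alpha\cap K\cap(\lambda z+\Pi)$ contains a $d$-ball of fixed positive radius, while $\HH^d(M_\alpha)=0$.
\end{itemize}
The layer-cake integrand is thus strictly larger on an $\alpha$-interval of positive length, which gives strict inequality for $\lambda\in[0,\lambda_0)$.

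I expect Step 3 to be the main obstacle, specifically justifying that the slices near $\lambda=0$ contain a uniform $d$-ball around $\lambda z$. This rests on $o\in{\rm relint}\,K$ and on $\Pi\subset{\rm aff}\,K$. I would also check that infinite values of $f$ at $o$ cause no trouble in \eqref{levelsets-integral}.
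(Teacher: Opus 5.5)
Your proposal is correct and follows essentially the same route as the paper. The paper also uses \eqref{centroid-G} to write $\lambda z$ as a positive convex combination $\sum_i\lambda_ig_iz$ with $g_1={\rm Id}$, which is equivalent to your $\lambda\,{\rm Id}+(1-\lambda)\sum_iw_ig_i$. It then applies \eqref{BMmulti} to the sets $g_i(M\cap\{f>\alpha\})$, integrates via \eqref{levelsets-integral}, and gets strictness from an interval $(A_\eta,\widetilde{A}_\eta)$ of levels that miss $M$ but meet the slices near $o$ in positive measure. Your reading of the slice as $K\cap(\lambda z+\Pi)$ is the intended one, as the application in the proof of Theorem~\ref{q0n-G-weighted} shows.

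One small point, which you share with the paper (the paper simply asserts $f(o)=\sup f$): in the non-strict part you tacitly need $M\cap\{f>\alpha\}=\emptyset$ for $\alpha\geq f(o)$. This is automatic if the superlevel sets are convex for all $\alpha>0$ (e.g.\ when $f(o)=\infty$, as in the application), but it does not follow from convexity only for $\alpha\in(0,f(o))$.
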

\proof We set $\HH^m(\emptyset)=0$ for any $m>0$. We may assume that $\lambda<1$ (and hence $\lambda\in[0,1)$), and since ${\rm lin}\,K$ is invariant under $G$, we may also  assume that $K$ is $n$-dimensional. In addition, we may also assume that
$$
\int_{M}f\HH^d>0, \mbox{ \ and hence }
A=\sup\{f(x):\,x\in M\}>0,
$$
where   $f(o)=\sup\{f(x):\,x\in \R^n\}\geq A$ since $f$ is $G$-invariant, and the level sets are convex.

Now $\Pi^\bot$ is also a $G$-invariant linear subspace, and $C={\rm conv}\{gz:g\in G\}\subset \Pi^\bot$ is a $G$-invariant compact convex set. Since $\sigma(C)=o$, we deduce the existence of $\tilde{g}_1,\ldots,\tilde{g}_k\in G$, $k\geq 2$, and $\tilde{\lambda}_1,\ldots,\tilde{\lambda}_k>0$ with $\sum_{i=1}^k\tilde{\lambda}_i=1$ such that
$o=\sum_{i=1}^k\tilde{\lambda}_i\tilde{g}_iz$. For $g_i=\tilde{g}_1^{-1}\tilde{g}_i\in G$, $i=1,\ldots,k$, we have $g_1={\rm Id}$, and $o=\sum_{i=1}^k\tilde{\lambda}_ig_iz$. Therefore,
\begin{equation}
\label{lambdazgiz}
\lambda z=\lambda z +(1-\lambda)\left(\sum_{i=1}^k\tilde{\lambda}_ig_iz\right)=\sum_{i=1}^k\lambda_ig_iz
\end{equation}
where $\lambda_1,\ldots,\lambda_k>0$,  $\sum_{i=1}^k\lambda_i=1$, and $g_1z=z$. We set
$$
M_i=g_iM\mbox{ \ for \ }i=1,\ldots,k\mbox{ \ and \ }M_\lambda=K\cap (\lambda z+\Pi^\bot)
$$
where $M_1=M$.
It follows from \eqref{lambdazgiz} that $\sum_{i=1}^k\lambda_iM_i\subset M_\lambda$ and from the $G$-invariance of $f$ that $A=\sup\{f(x):\,x\in M_i\}$ for $i=1,\ldots,k$. Since the level sets of $f$ are convex, if $\alpha\in(0,A)$, then
$$
\sum_{i=1}^k\lambda_i(M_i\cap \cap \{f>\alpha\})\subset M_\lambda\cap \{f>\alpha\},
$$
and hence the Brunn-Minkowski inequality \eqref{BMmulti} and the $G$-invariance of $f$ yields that
\begin{equation}
\label{Mlambda-levelset-est}
\HH^d\left(M_\lambda\cap \{f>\alpha\}\right)\geq \HH^d\left(M\cap \{f>\alpha\}\right).
\end{equation}
We conclude \eqref{section-eq} since \eqref{levelsets-integral} implies that
\begin{align}
\nonumber
\int_{M_\lambda}f(x)\,dx&=\int_0^\infty\HH^d\left(M_\lambda\cap \{f>\alpha\}\right)\,d\alpha
\geq \int_0^A\HH^d\left(M_\lambda\cap \{f>\alpha\}\right)\,d\alpha\\
\label{Mlambda-int-levelset-est}
&\geq \int_0^A\HH^d\left(M\cap \{f>\alpha\}\right)\,d\alpha=
\int_{M}f(x)\,dx.
\end{align}

Now we assume that  $o\in{\rm int}\{f>\alpha\}$ for any $\alpha\in(0,f(o))$, and
for any $\eta>0$, there exists $A_\eta\in(0,f(o))$ such that $\{f>A_\eta\}\subset \eta\,B^n$. We choose $\eta>0$ such that $M\cap \eta B^n=\emptyset$, and hence $A_\eta\geq A$. Next,  we choose an $\widetilde{A}_\eta\in (A_\eta,f(o))$ and $r_\eta>0$ such that $r_\eta B^n\subset \{f>\widetilde{A}_\eta\}$.

We deduce from \eqref{centroid-G} that $\sigma(K)=o$. It follows that $M_0=K\cap \Pi$ is a $d$-dimensional compact convex set and is $G$-invariant, and hence $\sigma(M_0)=o$ according to  \eqref{centroid-G}. We deduce that there exists
$\lambda_0>0$ such that
$$
\HH^d(M_\lambda\cap r_\eta B^n)>0 \mbox{ \ if }0\leq \lambda<\lambda_0.
$$
Thus, if $0\leq \lambda<\lambda_0$, then
$$
\int_{A_\eta}^{\widetilde{A}_\eta}\HH^d\left(M_\lambda\cap \{f>\alpha\}\right)\,d\alpha\geq
\HH^d\left(M_\lambda\cap \{f>\widetilde{A}_\eta\}\right)\cdot(\widetilde{A}_\eta-A_\eta)>0.
$$
It follows from the fact that $A_\eta\geq A$ and \eqref{Mlambda-int-levelset-est},  that
\begin{align*}
\int_{M_\lambda}f(x)\,dx&=\int_0^\infty\HH^d\left(M_\lambda\cap \{f>\alpha\}\right)\,d\alpha
> \int_0^A\HH^d\left(M_\lambda\cap \{f>\alpha\}\right)\,d\alpha\\
&\geq \int_0^A\HH^d\left(M\cap \{f>\alpha\}\right)\,d\alpha=
\int_{M}f(x)\,dx,
\end{align*}
verifying the strict inequality in \eqref{section-eq}.
\proofbox

\noindent{\bf Proof of Theorem~\ref{q0n-G-weighted}:}
We shall prove that if
$G\subset O(n)$ is a closed subgroup without a non-zero fixed point,
$K,Q\in\mathcal{K}^n_{o}$ are invariant under $G$,
and $L\subset\R^n$ is a $G$-invariant linear $m$-dimensional subspace with $m=1,\ldots,n-1$, then
\begin{equation}
\label{q0n-G-weighted-proof}
\widetilde{C}_q(K,Q;L\cap S^{n-1})< \frac{m}{q}\cdot \widetilde{V}_q(K,Q).
\end{equation}
Recall that \eqref{dualintrvolQ} yields
\begin{equation}
\label{dualintrvolQ0}
\widetilde{V}_q(K,Q)=\frac{q}n\int_K\|x\|_Q^{q-n}\,dx.
\end{equation}
On the other hand, writing
$$
\Xi=\bigcup\left\{{\rm conv}\{o,x\}:\,x\in {\pmb\nu}_K^{-1}(L\cap S^{n-1})\right\},
$$
 then \eqref{dual-curvature-integral} implies that
\begin{equation}
\label{dual-curvature-integral0}
\widetilde{C}_q(K,Q;L\cap S^{n-1})=\frac{q}n\int_{\Xi}\|x\|_Q^{q-n}\,dx.
\end{equation}

We write $\widetilde{K}$ to denote the ($G$-invariant) orthogonal projection of $K$ into $L$ and
$\partial \widetilde{K}$ to denote the relative boundary of $\widetilde{K}$. For $u\in L\cap S^{n-1}$, let
$\varrho_{\widetilde{K}}(u)>0$ be the radial function of $\widetilde{K}$; namely,
$\varrho_{\widetilde{K}}(u)\cdot u\in\partial \widetilde{K}$.  We define
$M_x=K\cap (x+L^\bot)$ for $x\in \widetilde{K}$, and if $x=\varrho_{\widetilde{K}}(u)\cdot u$ for $u\in L\cap S^{n-1}$, then
we set $\widetilde{M}_u=M_x$. Setting $d=n-m={\rm dim}\,L^\bot$ and using Fubini's theorem and polar coordinates in $L$, \eqref{dualintrvolQ0} yields
$$
\widetilde{V}_q(K,Q)=\frac{q}n\int_{L\cap S^{n-1}}\int_0^{\varrho_{\widetilde{K}}(u)}r^{m-1}
\int_{M_{ru}}\|y\|_Q^{q-n}\,d\HH^d(y)\,drdu.
$$
Here, Lemma~\ref{section} implies that for any $u\in L\cap S^{n-1}$, there exists $r_u>0$ such that
if $0\leq r\leq \varrho_{\widetilde{K}}(u)$, then
$$
\int_{M_{ru}}\|y\|_Q^{q-n}\,d\HH^d(y)\geq \int_{\widetilde{M}_u}\|y\|_Q^{q-n}\,d\HH^d(y).
$$
And if in addition $r<r_u$, then
$$
\int_{M_{ru}}\|y\|_Q^{q-n}\,d\HH^d(y)> \int_{\widetilde{M}_u}\|y\|_Q^{q-n}\,d\HH^d(y).
$$
Therefore,
\begin{align}
\nonumber
\widetilde{V}_q(K,Q)&>\frac{q}n\int_{L\cap S^{n-1}}\int_0^{\varrho_{\widetilde{K}}(u)}r^{m-1}
\int_{\widetilde{M}_u}\|y\|_Q^{q-n}\,d\HH^d(y)\,drdu\\
\label{dualintrvolQ-est}
&=\frac{q}{nm}\int_{L\cap S^{n-1}}\varrho_{\widetilde{K}}(u)^m
\int_{\widetilde{M}_u}\|y\|_Q^{q-n}\,d\HH^d(y)\,du.
\end{align}

Concerning the dual curvature measure, if $u\in L\cap S^{n-1}$ and $0< r< \varrho_{\widetilde{K}}(u)$, then
the $\Xi$ in \eqref{dual-curvature-integral0} satisfies (recall that $n=d+m$) that
$$
\int_{\Xi\cap (ru+L^\bot)}\|y\|_Q^{q-n}\,d\HH^d(y)=
\frac{r^{q-m}}{\varrho_{\widetilde{K}}(u)^{q-m}}\int_{\widetilde{M}_u}\|y\|_Q^{q-n}\,d\HH^d(y).
$$

Therefore, using Fubini's theorem and converting to polar coordinates in $L$ to evaluate \eqref{dual-curvature-integral0}, we deduce  that
\begin{align*}
\widetilde{C}_q(K,Q;L\cap S^{n-1})&=\frac{q}n\int_{L\cap S^{n-1}}\int_0^{\varrho_{\widetilde{K}}(u)}\frac{r^{q-1}}{\varrho_{\widetilde{K}}(u)^{q-m}}
\int_{M_{ru}}\|y\|_Q^{q-n}\,d\HH^d(y)\,drdu\\
&=\frac{1}{n}\int_{L\cap S^{n-1}}\varrho_{\widetilde{K}}(u)^m
\int_{\widetilde{M}_u}\|y\|_Q^{q-n}\,d\HH^d(y)\,du.
\end{align*}
Comparing this formula to \eqref{dualintrvolQ-est} yields \eqref{q0n-G-weighted-proof}.
\proofbox

\section{Proof of Theorem~\ref{qn+1-Lpzonoid-weighted} }

We recall that for $p\geq 1$ and a finite even Borel measure $\mu$ on $S^{n-1}$ not concentrated on any great subsphere, the polar $L_p$ zonoid $Z_p^*(\mu)$ is the unit ball of the norm
\begin{equation}
\label{polarLpzondoid-latedef}
\|x\|_{Z_p^*(\mu)}=\left(\int_{S^{n-1}}|\langle x,u\rangle|^p\,d\mu(u)\right)^{\frac1p},\qquad x\in\R^n.
\end{equation}
We recall Theorem~\ref{qn+1-Lpzonoid-weighted} in the following form.

\begin{theo}
\label{qn+1-Lpzonoid-weighted0}
Let $n\geq 2$ and $q\geq n+1$. If $K,Q\in\mathcal{K}^n_{o}$ where
Q is a polar $L_{q-n}$ zonoid, then
\begin{equation}
\label{subspace-concentration-G0}
\widetilde{C}_{q}(K,Q;L\cap S^{n-1})<\mbox{$\frac{q-n+{\rm dim}\,L}q$}\cdot \widetilde{C}_{q}(K,Q;S^{n-1})
\end{equation}
for any proper linear subspace $L\subset \R^n$.
\end{theo}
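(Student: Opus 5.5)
The plan is to exploit the special form of the weight. Since $Q=Z_{p}^*(\mu)$ with $p=q-n\geq 1$, \eqref{polarLpzondoid-latedef} gives
$$
\|x\|_Q^{q-n}=\int_{S^{n-1}}|\langle x,v\rangle|^{p}\,d\mu(v).
$$
Substituting this into \eqref{dual-curvature-integral} (or into \eqref{intgCqoin}) and using Fubini, both sides of \eqref{subspace-concentration-G0} become $\mu$-averages over $v\in S^{n-1}$ of the same quantities computed with the rank-one weight $w_v(x)=|\langle x,v\rangle|^p$ in place of $\|x\|_Q^{q-n}$. Concretely, for a Borel $\eta\subset S^{n-1}$ I set
$$
C_v(\eta)=\frac qn\int_{\Xi(\eta)}|\langle x,v\rangle|^p\,dx
=\frac1n\int_{{\pmb\nu}_K^{-1}(\eta)}\langle \nu_K(x),x\rangle|\langle x,v\rangle|^p\,d\HH^{n-1}(x),
$$
so that $\widetilde C_q(K,Q;\eta)=\int_{S^{n-1}} C_v(\eta)\,d\mu(v)$. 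It therefore suffices to prove, for every $v$ with $C_v(S^{n-1})>0$ (which holds for all $v$, since $o\in{\rm int}\,K$),
$$
C_v(L\cap S^{n-1})<\frac{q-n+m}{q}\,C_v(S^{n-1}),\qquad m={\rm dim}\,L .
$$
Integrating this against the nonzero measure $\mu$ then yields \eqref{subspace-concentration-G0}, with strictness preserved.

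For the single weight $w_v$, which is $p$-homogeneous, convex and even, I will follow the divergence-theorem approach of Henk--Pollehn \cite{HeP18}. Since ${\rm div}(x\,w_v(x))=(n+p)w_v=q\,w_v$, the total mass is $C_v(S^{n-1})=\frac qn\int_K w_v$. Next I decompose $x=P_Lx+P_{L^\bot}x$ and apply the divergence theorem to the vector field $F(x)=P_{L^\bot}x\,w_v(x)$. Its divergence is
$$
{\rm div}\,F=(n-m)\,w_v+p\,|\langle x,v\rangle|^{p-2}\langle x,v\rangle\langle P_{L^\bot}x,v\rangle .
$$
Its flux through $\partial K$ vanishes on ${\pmb\nu}_K^{-1}(L\cap S^{n-1})$, since there $\langle \nu,P_{L^\bot}x\rangle=0$. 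The aim is to bound the contribution of boundary points with normals outside $L$ from below by $\frac{n-m}{q}\cdot nC_v(S^{n-1})$. Equivalently, I will show
$$
\int_{\partial K\setminus{\pmb\nu}_K^{-1}(L)}\langle\nu,x\rangle w_v\,d\HH^{n-1}\;>\;(n-m)\int_K w_v\,dx .
$$
To organize this I will again use the Fubini slicing of the proof of Theorem~\ref{q0n-G-weighted}: polar coordinates in $L$, with fibres $M_{ru}=K\cap(ru+L^\bot)$. On each fibre I compare $w_v$ restricted to $M_{ru}$ with its values on the fibre $\widetilde M_u$ over the relative boundary of the projection $\widetilde K$.

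The main obstacle is exactly this fibrewise comparison. The naive homothety $(r/\varrho)\widetilde M_u\subset M_{ru}$ only yields the trivial bound $C_v(L\cap S^{n-1})\leq C_v(S^{n-1})$, because the scaling exponent $q-m$ exactly cancels. To beat it I will use the convexity of $t\mapsto|t|^p$ for $p\ge1$; this is precisely where $q\geq n+1$ enters. Convexity controls the cross term $p|\langle x,v\rangle|^{p-2}\langle x,v\rangle\langle P_{L^\bot}x,v\rangle$ in ${\rm div}\,F$, via the inequality $|a+b|^p\geq |a|^p+p|a|^{p-2}a\,b$, and turns it into the extra $p=q-n$ in the numerator. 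The fact that $o\in{\rm int}\,K$ then gives strictness, since the inequality is strict on a set of positive measure near $L^\bot$. If this direct route fails, I will fall back on reproducing Henk--Pollehn's original computation for $|x|^{q-n}$ verbatim, with $|x|$ replaced by $|\langle x,v\rangle|$, checking that only homogeneity and convexity of the weight were used there.
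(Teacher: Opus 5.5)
Your reduction to the rank-one weights $w_v(x)=|\langle x,v\rangle|^{q-n}$ via Fubini is sound. It is exactly how Corollary~\ref{Henk-Pollehn-cor} is obtained from Lemma~\ref{Henk-Pollehn}. The gap is the step you yourself call the main obstacle. The fibrewise comparison is never supplied, and it cannot be supplied by the route you sketch. Your argument never uses that $K$ is origin-symmetric, and without that hypothesis the inequality is false. The hypothesis is missing from the statement as printed, but it is in Henk--Pollehn. The paper's proof uses it tacitly through $\Gamma_u=\mathrm{conv}\{\widetilde M_u,-\widetilde M_u\}\subset K$.

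Here is a counterexample. Let $n=2$ and $Q=B^2$, which is a polar $L_p$ zonoid for every $p\geq 1$ with $\mu$ a multiple of arc length. Let $L=\R e_1$ and $K_\varepsilon=\mathrm{conv}\{(-\varepsilon,0),(1,-1),(1,1)\}\in\mathcal{K}^2_o$. Then ${\pmb\nu}_{K_\varepsilon}^{-1}(L\cap S^1)$ is the edge $\{1\}\times[-1,1]$ plus the vertex $(-\varepsilon,0)$. Hence $\widetilde C_q(K_\varepsilon,Q;L\cap S^1)=\frac q2\int_T\|x\|^{q-2}\,dx$ with $T=\mathrm{conv}\{o,(1,-1),(1,1)\}$, which is independent of $\varepsilon$. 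Meanwhile $K_\varepsilon\setminus T$ has area $\varepsilon$, so the ratio tends to $1>\frac{q-1}{q}$.

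In your divergence identity this failure shows up in the boundary term $\langle\nu,P_Lx\rangle w_v$ on faces with normal outside $L$, and that term has no sign. On the slanted edges $\langle\nu,x\rangle=O(\varepsilon)$, while $\langle\nu,P_{L^\bot}x\rangle$ is of order one. Since the target inequality is false here, no pointwise convexity estimate such as $|a+b|^p\geq|a|^p+p|a|^{p-2}ab$ can close the argument.

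The missing idea is the way symmetry enters. Suppose $K=-K$ and write $\varrho=\varrho_{K|L}(u)$. Then $\Gamma_u\subset K$, and its slice at $ru+L^\bot$ is the Minkowski combination $M_\lambda=(1-\lambda)\widetilde M_u+\lambda(-\widetilde M_u)$ with $\lambda=\frac{1-r/\varrho}{2}$. Unlike the homothetic copy $\frac r\varrho\widetilde M_u$, this slice is not shrunk. Apply Lemma~\ref{Henk-Pollehn} with $K_0=\widetilde M_u$ and $K_1=-\widetilde M_u$, and use $M_{1-\lambda}=-M_\lambda$ and the evenness of the weight. This gives $\int_{M_\lambda}\|x\|_Q^{q-n}\,dx\geq (r/\varrho)^{q-n}\int_{\widetilde M_u}\|x\|_Q^{q-n}\,dx$, i.e.\ exponent $q-n$ in place of the trivial $q-m$. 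Integrating $r^{m-1}(r/\varrho)^{q-n}$ over $[0,\varrho]$ gives $\widetilde C_q(K,Q;S^{n-1})\geq\frac{q}{n(q-n+m)}\int_{L\cap S^{n-1}}\varrho^m\int_{\widetilde M_u}\|y\|_Q^{q-n}\,dy\,du$. Comparing with the exact formula $\widetilde C_q(K,Q;L\cap S^{n-1})=\frac1n\int_{L\cap S^{n-1}}\varrho^m\int_{\widetilde M_u}\|y\|_Q^{q-n}\,dy\,du$ yields the bound. Your fallback would arrive here, but only once the symmetry assumption is added.

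Your claim of strictness ``for every $v$'' is also too strong, even for symmetric $K$. For the cylinder $K=(B^n\cap L)+(B^n\cap L^\bot)$ and $v\in L$, one computes $C_v(L\cap S^{n-1})=\frac{q-n+m}{q}\,C_v(S^{n-1})$ exactly. So strictness must come from a set of $v$ of positive $\mu$-measure (e.g.\ off $L$), and that needs its own argument. The paper's chain of inequalities likewise only gives the non-strict version directly.
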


As the core statement within the proof of Theorem~1.9  in Henk-Pollehn \cite{HeP18},
the paper \cite{HeP18} proves the following result as equation (3.4).

\begin{lemma}
\label{Henk-Pollehn}
Let $p\geq 1$, $u\in S^{n-1}$, $d\in\{1,\ldots,n-1\}$ and $K_0,K_1\subset \R^n$ be $d$-dimensional compact convex sets whose affine hulls are parallel. If $K_\lambda=(1-\lambda)K_0+\lambda K_1$ for $\lambda\in[0,1]$, then  for any $\lambda\in[0,1]$, we have
\begin{align}
\label{Henk-Pollehn-eq}
&\int_{K_\lambda}|\langle x,u\rangle|^p\,\HH^d(x)+\int_{K_{1-\lambda}}|\langle x,u\rangle|^p\,\HH^d(x)\\
\nonumber
\geq & |2\lambda-1|^p\left(\int_{K_0}|\langle x,u\rangle|^p\,\HH^d(x)+\int_{K_1}|\langle x,u\rangle|^p\,\HH^d(x)\right).
\end{align}
\end{lemma}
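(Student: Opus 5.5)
We aim to reduce the inequality to an elementary two–point inequality for the scalar $t\mapsto|t|^p$, and then to transport it to the sets using a measure-preserving coupling of $K_0$ and $K_1$. Write $c=|2\lambda-1|\in[0,1]$, and for $x_0\in K_0$, $x_1\in K_1$ put $a=\langle x_0,u\rangle$, $b=\langle x_1,u\rangle$. Then
\[
y=(1-\lambda)x_0+\lambda x_1\in K_\lambda,\qquad y'=\lambda x_0+(1-\lambda)x_1\in K_{1-\lambda}.
\]

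\emph{Step 1 (scalar inequality).} We claim
\[
|(1-\lambda)a+\lambda b|^p+|\lambda a+(1-\lambda)b|^p\geq c^p\left(|a|^p+|b|^p\right).
\]
To see this, set $s=\frac{a+b}2$ and $t=\frac{a-b}2$. The left side is $g(c)=|s+ct|^p+|s-ct|^p$, and the right side is $c^pg(1)$. For $c>0$ we have
\[
\frac{g(c)}{c^p}=|s/c+t|^p+|s/c-t|^p.
\]
This is an even convex function of $s/c$, hence nondecreasing in $|s/c|$. Since $|s/c|\geq|s|$, it is at least its value at $s/c=s$, namely $g(1)$. The case $c=0$ is trivial. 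This step is routine.

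\emph{Step 2 (coupling).} Write the affine hulls as $z_0+V$ and $z_1+V$ with $\dim V=d$, and identify each with $\R^d$. Let $T:K_0\to K_1$ be the Brenier map (gradient of a convex function) pushing the uniform measure on $K_0$ to the uniform measure on $K_1$, so that $\det DT=r^d$ with $r^d=\HH^d(K_1)/\HH^d(K_0)$. The maps
\[
\Phi_\lambda(x)=(1-\lambda)x+\lambda T(x)
\]
are injective from $K_0$ into $K_\lambda$, by monotonicity of $T$. By Minkowski's determinant inequality their Jacobians satisfy
\[
J_\lambda\geq\left((1-\lambda)+\lambda r\right)^d,\qquad J_{1-\lambda}\geq\left(\lambda+(1-\lambda)r\right)^d.
\]
Changing variables gives
\[
\int_{K_\lambda}|\langle x,u\rangle|^p\geq\int_{K_0}|\langle\Phi_\lambda x,u\rangle|^pJ_\lambda\,dx,
\]
and similarly for $1-\lambda$, while $\int_{K_1}|\langle x,u\rangle|^p=r^d\int_{K_0}|\langle Tx,u\rangle|^p\,dx$. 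Applying Step 1 pointwise with $x_0=x$ and $x_1=Tx$ then finishes the proof when $r=1$, i.e.\ when $\HH^d(K_0)=\HH^d(K_1)$.

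\emph{Step 3 (main obstacle: unequal volumes).} In general the weights do not match: $a$ carries weight $1$ and $b$ carries weight $r^d$, whereas both mixed points carry the weights $J_\lambda,J_{1-\lambda}$. I would try to absorb the weights into Step 1 by proving the weighted version
\[
|(1-\lambda)a+\lambda b|^p\big((1-\lambda)+\lambda r\big)^d+|\lambda a+(1-\lambda)b|^p\big(\lambda+(1-\lambda)r\big)^d\ \geq\ c^p\big(|a|^p+r^d|b|^p\big).
\]
If this fails, I would replace the direct coupling by the level-set approach. By \eqref{levelsets-integral}, $\int_K|\langle x,u\rangle|^p=\int_0^\infty\HH^d(K\setminus S_\alpha)\,d\alpha$, where $S_\alpha=\{|\langle x,u\rangle|\leq\alpha^{1/p}\}$ is a symmetric slab. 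After the substitution $\alpha=c^p\beta$ it then suffices to show
\[
\HH^d(K_\lambda\setminus cS)+\HH^d(K_{1-\lambda}\setminus cS)\geq\HH^d(K_0\setminus S)+\HH^d(K_1\setminus S)
\]
for every symmetric slab $S$. This last inequality I would attack by Knothe-type rearrangement along the direction of the projection of $u$ onto $V$, reducing it to one-dimensional fibres, where Step 1 (with $p$ replaced by the indicator structure) and Brunn–Minkowski \eqref{BMmulti} on the fibres apply.
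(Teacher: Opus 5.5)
\textbf{Verdict: there is a genuine gap in Step 3, and it cannot be closed, because for $d\geq 2$ the lemma as stated is false when the volumes differ.}

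Your Steps 1 and 2 are sound. The two-point inequality is correct, by evenness and convexity of $\sigma\mapsto|\sigma+t|^p+|\sigma-t|^p$. The transport argument then proves \eqref{Henk-Pollehn-eq} whenever $\HH^d(K_0)=\HH^d(K_1)$, since Minkowski's determinant inequality gives $J_\lambda,J_{1-\lambda}\geq 1$ when $r=1$.

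Here is a counterexample to the unequal-volume case. Let $V$ be the common direction space of the affine hulls, pick $u\perp V$, and let $D$ be the unit $d$-ball of $V$. Put $K_0=-u+\epsilon D$ and $K_1=u+D$, with $\epsilon>0$ and $\epsilon\neq 1$. Then
\[
K_\lambda=(2\lambda-1)u+\big((1-\lambda)\epsilon+\lambda\big)D,\qquad K_{1-\lambda}=(1-2\lambda)u+\big(\lambda\epsilon+1-\lambda\big)D.
\]
So $|\langle x,u\rangle|^p$ equals $|2\lambda-1|^p$ on both mixed sets and equals $1$ on $K_0\cup K_1$. Inequality \eqref{Henk-Pollehn-eq} therefore reduces to
\[
\big((1-\lambda)\epsilon+\lambda\big)^d+\big(\lambda\epsilon+1-\lambda\big)^d\geq \epsilon^d+1 .
\]
This fails for every $\lambda\in(0,1)\setminus\{\frac12\}$ by strict convexity of $t\mapsto t^d$. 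For instance, with $d=2$, $\lambda=\frac14$ and $\epsilon\to0$, the left side tends to $\frac58$ while the right side tends to $1$.

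This configuration is exactly the case $a=-1$, $b=1$ of your proposed weighted scalar inequality, so that inequality fails. For slabs of half-width less than $1$, your slab inequality demands the same false volume comparison. Neither fallback can work, and no choice of coupling can rescue it.

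What is true, and all the paper actually uses, is the equal-volume case. The only application is Corollary~\ref{Henk-Pollehn-cor}, with $K_0=M$ and $K_1=-M$. The paper gives no proof of its own; it imports the estimate from Henk--Pollehn \cite{HeP18}.

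The right repair is to drop Step 3 and add the hypothesis $\HH^d(K_0)=\HH^d(K_1)$ (or $K_1=-K_0$). Your Steps 1--2 then give a self-contained proof of that corrected statement. For full rigor in Step 2, you can do either of the following:
\begin{itemize}
\item invoke McCann's change-of-variables theorem for Brenier maps (Alexandrov Hessian, a.e.\ injectivity of $\Phi_\lambda$);
\item use the Knothe map instead, whose triangular Jacobian reduces the bound $\prod_i\big((1-\lambda)+\lambda t_i\big)\geq 1$ to the weighted AM--GM inequality.
\end{itemize}
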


\begin{coro}
\label{Henk-Pollehn-cor}
If $p\geq 1$, $\tau\in[0,1]$,  $z\in \R^n\backslash \{o\}$, $M\subset z+z^\bot$ is an $d$-dimensional compact convex set for $d\in\{1,\ldots,n-1\}$, and $Q=Z_p^*(\mu)$ for a finite even Borel measure $\mu$ on $S^{n-1}$ not concentrated on any great subsphere, then
\begin{equation}
\label{Henk-Pollehn-cor-eq}
\int_{(\tau z+z^\bot)\cap {\rm conv}\,\{M,-M\}}\|x\|_Q^p\,d\HH^{d}(x)\geq \tau^p\int_{M}\|x\|_Q^p\,d\HH^{d}(x).
\end{equation}
\end{coro}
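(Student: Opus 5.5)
We derive Corollary~\ref{Henk-Pollehn-cor} from Lemma~\ref{Henk-Pollehn} by applying it to the two parallel $d$-dimensional sets $K_0=-M$ and $K_1=M$ and then integrating over $\mu$. Since $M\subset z+z^\bot$, we have $-M\subset -z+z^\bot$, so the affine hulls of $K_0$ and $K_1$ are parallel, and $K_\lambda=(1-\lambda)(-M)+\lambda M$ lies in $(2\lambda-1)z+z^\bot$. Choose $\lambda=\frac{1+\tau}{2}\in[\frac12,1]$, so that $2\lambda-1=\tau$ and $K_\lambda\subset \tau z+z^\bot$. By convexity $K_\lambda\subset{\rm conv}\{M,-M\}$, hence $K_\lambda\subset(\tau z+z^\bot)\cap{\rm conv}\{M,-M\}$. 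Also $K_{1-\lambda}=-K_\lambda$.

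For a fixed $u\in S^{n-1}$ the integrand $x\mapsto|\langle x,u\rangle|^p$ is even, and $\HH^d$ is invariant under $x\mapsto -x$. Hence
\[
\int_{K_{1-\lambda}}|\langle x,u\rangle|^p\,d\HH^d(x)=\int_{K_\lambda}|\langle x,u\rangle|^p\,d\HH^d(x)
\quad\mbox{and}\quad
\int_{K_0}|\langle x,u\rangle|^p\,d\HH^d(x)=\int_{M}|\langle x,u\rangle|^p\,d\HH^d(x).
\]
Substituting these into \eqref{Henk-Pollehn-eq} and dividing by $2$ gives, for every $u\in S^{n-1}$,
\[
\int_{K_\lambda}|\langle x,u\rangle|^p\,d\HH^d(x)\geq \tau^p\int_{M}|\langle x,u\rangle|^p\,d\HH^d(x).
\]

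Next integrate this inequality in $u$ against $\mu$. The functions involved are nonnegative and continuous, and the sets are compact, so Tonelli's theorem lets us swap the order of integration. By \eqref{polarLpzondoid-latedef} this yields
\[
\int_{K_\lambda}\|x\|_Q^p\,d\HH^d\geq\tau^p\int_M\|x\|_Q^p\,d\HH^d.
\]
Because $\|x\|_Q^p\ge0$ and $K_\lambda$ is contained in the section $(\tau z+z^\bot)\cap{\rm conv}\{M,-M\}$, enlarging the domain of integration only increases the left-hand side. This gives \eqref{Henk-Pollehn-cor-eq}.

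The only delicate step is the bookkeeping in the first paragraph. One has to check that the choice $\lambda=\frac{1+\tau}2$ is admissible, that $K_{1-\lambda}=-K_\lambda$, and that the symmetrization really turns the two-term inequality \eqref{Henk-Pollehn-eq} into the one-term bound. The endpoint cases $\tau=0$ and $\tau=1$ are covered by the same argument: at $\tau=0$ the right-hand side vanishes, and at $\tau=1$ we have $K_\lambda=M$.
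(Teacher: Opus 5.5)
Your proof is correct and is essentially the paper's argument. The paper takes $K_0=M$, $K_1=-M$ and $\lambda=\frac{1-\tau}2$, which yields the same set $K_\lambda=(\tau z+z^\bot)\cap{\rm conv}\{M,-M\}$ (the paper asserts equality here, whereas you only need the inclusion); it then collapses the two-term inequality of Lemma~\ref{Henk-Pollehn} via $K_{1-\lambda}=-K_\lambda$ and integrates against $\mu$.

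One small correction: the affine hulls of $M$ and $-M$ are parallel because ${\rm aff}(-M)=-{\rm aff}(M)$ is a translate of ${\rm aff}(M)$. It does not follow merely from $M$ and $-M$ lying in parallel hyperplanes, which would not suffice when $d<n-1$.
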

\begin{proof}
We observe that $\tau z=(1-\lambda)z+\lambda (-z)$ for $\lambda=\frac{1-\tau}2\in[0,1]$ and $\tau=|2\lambda-1|$, and hence
\begin{equation}
\label{tau-lambda}
M_\lambda:=(1-\lambda)M+\lambda (-M)=(\tau z+z^\bot)\cap {\rm conv}\,\{M,-M\}.
\end{equation}

We deduce from \eqref{polarLpzondoid-latedef}, and then from $M_{1-\lambda}=-M_\lambda$, \eqref{Henk-Pollehn-eq}, and \eqref{tau-lambda} that
\begin{align*}
\int_{M_\lambda}\|x\|_Q^p\,d\HH^{d}(x)=&
\int_{S^{n-1}}\int_{M_\lambda}|\langle x,u\rangle|^p\,d\HH^{d}(x)\,d\mu(u)\\
\geq &\tau^p\int_{S^{n-1}}\int_{M}|\langle x,u\rangle|^p\,d\HH^{d}(x)\,d\mu(u)=\tau^p\int_{M}\|x\|_Q^p\,d\HH^{d}(x).
\end{align*}
\end{proof}

\begin{proof}[Proof of Theorem~\ref{qn+1-Lpzonoid-weighted0}]
For any $z\in K|L$  and $u\in S^{n-1}\cap L$, we write
\begin{align*}
M_z=&(z+L^\bot)\cap K,\\
\widetilde{M}_u=&(\varrho_{K|L}(u)u+L^\bot)\cap K.
\end{align*}

In order to shorten formulas, we write $dy$ instead of $d\HH^k(y)$ where value of $k$ will be clear from the context.
We deduce from Corollary~\ref{Henk-Pollehn-cor} and $q\geq n+1$ that
\begin{align*}
\widetilde{C}_q(K,Q,S^{n-1})=&\frac{q}{n}\int_K\|x\|_Q^{q-n}\,dx=
\frac{q}{n}\int_{K|L}\int_{M_z}\|x\|_Q^{q-n}\,dx\,dz\\
\geq &\frac{q}{n}\int_{L\cap S^{n-1}}r^{m-1}\int_0^{\varrho_{K|L}(u)}\int_{(ru+L^\bot)\cap \Gamma_u}
\| x\|_Q^{q-n}\,dx \,dr du\\
\geq &\frac{q}{n}\int_{L\cap S^{n-1}}\int_0^{\varrho_{K|L}(u)}r^{m-1}\cdot \left(\frac{r}{\varrho_{K|L}(u)}\right)^{q-n}\int_{\widetilde{M}_u}\|x\|_Q^{q-n}\,dx\,dr \,du\\
=&\frac{q}{n(q-n+m)}\int_{L\cap S^{n-1}}\varrho_{K|L}(u)^{m}\int_{\widetilde{M}_u}\|x\|_Q^{q-n}\,dx\,dr \,du
\end{align*}

In order to determine $\widetilde{C}_q(K,Q;L\cap S^{n-1})$, we observe that
if $u\in L\cap S^{n-1}$ and $0< r< \varrho_{K|L}(u)$, then
the $\Xi$ in \eqref{dual-curvature-integral} satisfies that
$$
\int_{\Xi\cap (ru+L^\bot)}\|y\|_Q^{q-n}\,d\HH^d(y)=
\frac{r^{q-m}}{\varrho_{K|L}(u)^{q-m}}\int_{\widetilde{M}_u}\|y\|_Q^{q-n}\,d\HH^d(y).
$$
It follows from \eqref{dual-curvature-integral}, the use of polar coordinates in $L$ and Fubini's theorem that
$$
\widetilde{C}_q(K,Q;L\cap S^{n-1})=\frac{1}{n}\int_{L\cap S^{n-1}}\varrho_{K|L}(u)^m
\int_{\widetilde{M}_u}\|y\|_Q^{q-n}\,d\HH^d(y)\,du,
$$
proving Theorem~\ref{qn+1-Lpzonoid-weighted0}.
\end{proof}

\end{document}